\newcommand{\sumprime}{\if@display\sideset{}{'}\sum%
            \else\sum'\fi}
\begin{document}

\numberwithin{equation}{section}

% define theorem environments
\newtheorem{theorem}{Theorem}[section]
\newtheorem{proposition}[theorem]{Proposition}
\newtheorem{conjecture}[theorem]{Conjecture}
\def\theconjecture{\unskip}
\newtheorem{corollary}[theorem]{Corollary}
\newtheorem{lemma}[theorem]{Lemma}
\newtheorem{observation}[theorem]{Observation}
\newtheorem{definition}{Definition}
\numberwithin{definition}{section} %\def\thedefinition{\unskip}
\newtheorem{remark}{Remark}
\def\theremark{\unskip}
\newtheorem{kl}{Key Lemma}
\def\thekl{\unskip}
\newtheorem{question}{Question}
\def\thequestion{\unskip}
\newtheorem{example}{Example}
\def\theexample{\unskip}
\newtheorem{problem}{Problem}

\thanks{Supported by National Natural Science Foundation of China, No. 12271101}

\address [Bo-Yong Chen] {School of Mathematical Sciences,  Fudan University,  Shanghai, 200433, China}
\email{boychen@fudan.edu.cn}

\title{The theorems of M.  Riesz and Zygmund  in several complex variables}
\author{Bo-Yong Chen}

\date{}

\begin{abstract}
In this note,  we extend the well-known theorems of M.  Riesz and Zygmund  on conjugate functions as follows.  Let $\Omega$ be a  domain  in $\mathbb C^n$.   Suppose that $f=u+iv\in \mathcal O(\Omega)$ satisfies $v(z_0)=0$\/ for some $z_0\in \Omega$.  Then $
\|f\|_{p,z_0} \le C_p\, \|u\|_{p,z_0}
$ for $1<p<\infty$,  
where  $C_p$ is a constant  depending only on $p$ and 
$
\|u\|_{p,z_0}^p
$
is defined to be the value at $z_0$ of the least harmonic majorant of $|u|^p$.   
Moreover,   if $|u|\le 1$,  then for any $\alpha>1$,  there exists $C_\alpha>0$ such that $ \int_{\partial \Omega_t} \frac{\exp\left(\frac{\pi}2 |f| \right)}{(1+|f|)^\alpha}\, d\omega_{z_0,t} \le C_\alpha$ for any exhaustion $\{\Omega_t\}$ of $\Omega$ with $\Omega_t\ni z_0$, where $d \omega_{z_0,t}$ is the harmonic measure of $\Omega_t$ relative to $z_0$.   
Analogous results for  Poletsky-Stessin-Hardy spaces on hyperconvex domains are given.   
\end{abstract}

\maketitle

\section{Introduction}

We start with  the  celebrated result of M.  Riesz on conjugate functions in Hardy spaces.

\begin{theorem}[cf.  \cite{Riesz}]\label{th:Riesz}
Let $f=u+iv$ is a holomorphic function on the unit disc $\mathbb D$ normalized by  $v(0)=0$.     
If $u\in h^p$  for some $1<p<\infty$,  then $f\in H^p$,  such that $\|f\|_{H^p}\le C_p \|u\|_{h^p}$,  where $C_p$ is a constant depending only on $p$.   Here $h^p$ and $H^p$ denote the harmonic and holomorphic Hardy space respectively.   
\end{theorem}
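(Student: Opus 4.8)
\emph{Proof proposal.} The plan is to establish the estimate first for $1<p\le2$ by a subharmonicity argument, and then to deduce the range $2\le p<\infty$ from it by duality. Write $f=u+iv$; since $v(0)=0$ we have $f(0)=u(0)\in\R$.

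\emph{Step 1 ($1<p\le2$).} On $\D$ consider the continuous function
\[
\Phi:=|u|^{p}-\tfrac{p-1}{p}\,|f|^{p}.
\]
At points where $f\neq0$ one has, using $\Delta u=0$, the Cauchy--Riemann identity $|\nabla u|^{2}=|f'|^{2}$ and $\Delta|f|^{p}=p^{2}|f|^{p-2}|f'|^{2}$,
\[
\Delta\Phi=p(p-1)\,|f'|^{2}\bigl(|u|^{p-2}-|f|^{p-2}\bigr),
\]
which is $\ge0$ because $0\le|u|\le|f|$ and $t\mapsto t^{p-2}$ is non-increasing for $p\le2$. For $p>1$ the function $\Phi$ is of class $C^{1}$ on $\D$, it is $C^{2}$ away from the thin analytic set $\{u=0\}$, and the displayed expression is locally integrable on $\D$; hence $\Delta\Phi\ge0$ holds on all of $\D$ in the distributional sense and $\Phi$ is subharmonic. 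Consequently $r\mapsto\frac1{2\pi}\int_{0}^{2\pi}\Phi(re^{i\theta})\,d\theta$ is non-decreasing and bounded below by $\Phi(0)=\tfrac1p|u(0)|^{p}\ge0$, which rearranges to
\[
\frac{p-1}{p}\cdot\frac1{2\pi}\int_{0}^{2\pi}|f(re^{i\theta})|^{p}\,d\theta\ \le\ \frac1{2\pi}\int_{0}^{2\pi}|u(re^{i\theta})|^{p}\,d\theta\qquad(0<r<1).
\]
Taking the supremum over $r$ gives $f\in H^{p}$ with $\|f\|_{H^{p}}\le\bigl(p/(p-1)\bigr)^{1/p}\,\|u\|_{h^{p}}$.

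\emph{Step 2 ($2\le p<\infty$).} Put $q=p/(p-1)\in(1,2]$. Replacing $f$ by $f(r\,\cdot\,)$ and letting $r\uparrow1$ at the end, we may assume $f$ is holomorphic in a neighbourhood of $\overline{\D}$. The algebraic input is this: if $g,\varphi$ are holomorphic near $\overline{\D}$ with $\operatorname{Im}g(0)=\operatorname{Im}\varphi(0)=0$, then $g\varphi$ is holomorphic and $(g\varphi)(0)\in\R$, so the mean value over $\T$ of $\operatorname{Im}(g\varphi)$ vanishes, i.e.
\[
\frac1{2\pi}\int_{0}^{2\pi}\bigl(\operatorname{Re}g\cdot\operatorname{Im}\varphi+\operatorname{Im}g\cdot\operatorname{Re}\varphi\bigr)\,d\theta=0 .
\]
Now fix a real $\psi\in C^{\infty}(\T)$ with $\|\psi\|_{L^{q}(\T)}\le1$ and let $\varphi$ be the holomorphic function whose real part is the Poisson integral of $\psi$ and with $\operatorname{Im}\varphi(0)=0$. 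Applying the orthogonality relation to $g=f$ and $\varphi$, then H\"older, and finally Step~1 with exponent $q$ applied to $\varphi$ (note $\|\operatorname{Re}\varphi\|_{h^{q}}=\|\psi\|_{L^{q}(\T)}$),
\[
\Bigl|\frac1{2\pi}\int_{0}^{2\pi}v\,\psi\,d\theta\Bigr|=\Bigl|\frac1{2\pi}\int_{0}^{2\pi}u\cdot\operatorname{Im}\varphi\,d\theta\Bigr|\le\|u\|_{L^{p}(\T)}\,\|\varphi\|_{L^{q}(\T)}\le C_{q}\,\|u\|_{L^{p}(\T)} .
\]
Taking the supremum over such $\psi$ yields $\|v\|_{L^{p}(\T)}\le C_{q}\|u\|_{L^{p}(\T)}$, hence $\|f\|_{L^{p}(\T)}\le(1+C_{q})\|u\|_{L^{p}(\T)}$; undoing the dilation gives the assertion with $C_{p}=1+C_{q}$.

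\emph{The main obstacle.} The heart of the matter is Step~1, and specifically the sign of $|u|^{p-2}-|f|^{p-2}$: this monotonicity is exactly what confines the direct subharmonicity method to $p\le2$, so the complementary range has to be reached indirectly, here from the skew-symmetry of conjugation. A secondary but real point in Step~1 is the regularity bookkeeping, namely that $\Phi$ contributes no negative singular mass to its distributional Laplacian along $\{u=0\}$ or at the zeros of $f$; this is what licenses the passage from the pointwise computation to subharmonicity. (Optimising $\Phi$ into the extremal subharmonic function on a half-plane would in addition recover the sharp constants $\tan\frac{\pi}{2p}$ and $\cot\frac{\pi}{2p}$, but that refinement is not needed here.)
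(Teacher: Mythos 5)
Your proposal is correct, and it splits into one half that matches the paper and one half that does not. Your Step 1 is essentially the paper's argument for $1<p\le 2$: your pointwise inequality $\Delta\bigl(|u|^p-\tfrac{p-1}{p}|f|^p\bigr)\ge 0$ is the $n=1$, $\tau=0$ case of the paper's Lemma \ref{lm:Riesz-type}, which asserts that $\tfrac{p}{p-1}(u^2+\tau)^{p/2}-(|f|^2+\tau)^{p/2}$ is plurisubharmonic; the paper's $\tau$-regularization is precisely a device to avoid the distributional bookkeeping along $\{u=0\}$ and at the zeros of $f$ that you handle directly (your handling is right --- for $p>1$ the function is $C^1$ with locally integrable pointwise Laplacian, so no singular mass appears --- but the smoothed version is cleaner and is what lets the paper run the same computation in $\C^n$). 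Where you genuinely diverge is the range $p>2$: you use the classical duality/skew-adjointness argument, testing $v$ against $L^q(\T)$ functions via the mean-value identity for $\mathrm{Im}(f\varphi)$ and invoking Step 1 at the conjugate exponent $q\in(1,2]$, whereas the paper passes to powers of $f$ (first $f^2$ for $2<p\le 4$, then $f^{p'}$ with $p'$ the largest odd integer below $p$) and absorbs the cross terms by Young's inequality. Your route is shorter and gives a cleaner constant $1+C_q$, but it leans on features special to the disc --- an actual boundary measure on $\T$, $L^p$--$L^q$ duality there, and solvability of the Dirichlet problem for arbitrary smooth boundary data --- none of which survive on a general domain in $\C^n$ where only harmonic majorants are available; the paper's power trick is purely interior and is what makes Theorem \ref{th:Main} domain- and dimension-free, at the cost of a constant that blows up as $p\to 2+$.
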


It is well-known that Riesz's theorem breaks down for $p=\infty$.  Nevertheless,  Zygmund proved the following

\begin{theorem}[cf.  \cite{Zygmund}]
If $f=u+iv\in \mathcal O(\mathbb D)$ satisfies $|u|\le 1$ and $v(0)=0$,  then for any $\gamma<\pi/2$,   there exists a constant $C_\gamma>0$ such that 
$$
\int_0^{2\pi} \exp\left(\gamma \left|f(r e^{i\theta})\right| \right) d\theta \le C_\gamma,\ \ \ \forall\,r<1.
$$
\end{theorem}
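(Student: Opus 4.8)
The plan is to reduce the exponential integral to integrals of sub-unit powers of a nowhere-vanishing holomorphic function with non-negative real part, and then to control those by the mean value property.

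First I would pass to the strip. Since $|u|\le 1$, the function $f$ maps $\mathbb D$ into the closed strip $\{w\in\mathbb C:|\operatorname{Re}w|\le 1\}$, so $g:=\exp\!\big(\tfrac{i\pi}{2}f\big)$ is a nowhere-vanishing holomorphic function on $\mathbb D$ with
\[
\operatorname{Re}g=\cos\!\big(\tfrac{\pi}{2}u\big)\,e^{-\frac{\pi}{2}v}\ge 0,\qquad |g|=e^{-\frac{\pi}{2}v},\qquad |g(0)|=\big|e^{\frac{i\pi}{2}u(0)}\big|=1,
\]
the last equality using $v(0)=0$. Set $\beta:=2\gamma/\pi$, so that the hypothesis $\gamma<\pi/2$ is exactly $\beta<1$. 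From $|f|\le|u|+|v|\le 1+|v|$ and $e^{\gamma|v|}\le e^{\gamma v}+e^{-\gamma v}=|g|^{-\beta}+|g|^{\beta}$ one gets
\[
e^{\gamma|f(z)|}\le e^{\gamma}\big(|g(z)|^{\beta}+|g(z)|^{-\beta}\big),\qquad z\in\mathbb D,
\]
so it suffices to bound $\int_0^{2\pi}|g(re^{i\theta})|^{\pm\beta}\,d\theta$ uniformly in $r<1$.

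The key step is an elementary observation about any nowhere-vanishing $g\in\mathcal O(\mathbb D)$ with $\operatorname{Re}g\ge 0$: since $\mathbb D$ is simply connected, $\log g$ has a single-valued holomorphic branch, and because $g$ avoids $0$ while $\operatorname{Re}g\ge 0$ we may normalize it so that $\operatorname{Im}\log g\in[-\tfrac{\pi}{2},\tfrac{\pi}{2}]$. Then $g^{\beta}:=e^{\beta\log g}\in\mathcal O(\mathbb D)$ satisfies $|g^{\beta}|=|g|^{\beta}$ and $|\arg g^{\beta}|\le\beta\pi/2<\pi/2$, whence
\[
\operatorname{Re}\!\big(g^{\beta}\big)=|g|^{\beta}\cos\!\big(\arg g^{\beta}\big)\ \ge\ \cos\!\big(\tfrac{\beta\pi}{2}\big)\,|g|^{\beta}=(\cos\gamma)\,|g|^{\beta}.
\]
As $\operatorname{Re}(g^{\beta})$ is a non-negative harmonic function, the mean value property yields $\int_0^{2\pi}\operatorname{Re}(g^{\beta})(re^{i\theta})\,\tfrac{d\theta}{2\pi}=\operatorname{Re}\big(g(0)^{\beta}\big)\le|g(0)|^{\beta}$ for every $r<1$, hence $\int_0^{2\pi}|g(re^{i\theta})|^{\beta}\,d\theta\le 2\pi|g(0)|^{\beta}/\cos\gamma$. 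Applying this to $g$ and to $1/g$ (again nowhere-vanishing, with $\operatorname{Re}(1/g)=\operatorname{Re}(g)/|g|^2\ge 0$ and $|1/g(0)|=1$), and using $|g(0)|=1$, gives $\int_0^{2\pi}\big(|g|^{\beta}+|g|^{-\beta}\big)(re^{i\theta})\,d\theta\le 4\pi/\cos\gamma$. Combining with the inequality of the previous paragraph proves the claim with $C_\gamma=4\pi e^{\gamma}/\cos\gamma$.

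I do not expect a genuine obstacle: the entire content sits in choosing the auxiliary function $g=\exp(i\pi f/2)$ and in taking a power with exponent $\beta<1$, which is precisely what upgrades ``$\operatorname{Re}\ge 0$'' to ``$\operatorname{Re}>0$ on a proper subsector'' and so makes the mean value estimate available. The only points needing a little care are the single-valuedness and normalization of $\log g$ (no separate treatment of a constant $f$ is required, since non-constancy of $g$ is never used) and the blow-up $C_\gamma\to\infty$ as $\gamma\uparrow\pi/2$, which is consistent both with the failure of the statement at the endpoint and with the failure of M. Riesz's theorem for $p=\infty$.
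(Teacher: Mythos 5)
Your proof is correct. Note that the paper never proves this classical statement itself --- it is quoted from Zygmund's book --- so the relevant comparison is with the paper's proof of its generalization (Theorem \ref{th:Zygmund}, via Proposition \ref{prop:Kolmogorov}). Both arguments start the same way, mapping the strip to the right half-plane through $\exp\bigl(\pm\tfrac{\pi i}{2}f\bigr)$, but then diverge. You exploit $\gamma<\pi/2$ (i.e.\ $\beta<1$) to take the fractional power $g^{\beta}$, whose values lie in a proper subsector of the half-plane, so that $|g|^{\beta}\le (\cos\gamma)^{-1}\operatorname{Re}(g^{\beta})$ and the mean value property finishes the argument; this is in essence Kolmogorov's $L^{p}$, $p<1$, estimate for functions of non-negative real part, and it is elementary, self-contained, and gives the explicit constant $4\pi e^{\gamma}/\cos\gamma$, which necessarily blows up as $\gamma\uparrow\pi/2$. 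The paper instead targets the endpoint $\gamma=\pi/2$ with the logarithmic correction $(1+|f|)^{-\alpha}$, which requires the more delicate plurisubharmonic comparison of $\lambda_{\alpha}(\log|F|^{2})$ against $\lambda_{\alpha-1}(\log u^{2})$ plus the Poisson--Jensen formula; that endpoint result then implies the classical statement, since $e^{\gamma t}(1+t)^{\alpha}e^{-\pi t/2}$ is bounded for $\gamma<\pi/2$. Your route buys brevity and an explicit constant (and would in fact transfer verbatim to harmonic measures on arbitrary domains in $\mathbb{C}^{n}$, since $\operatorname{Re}(g^{\beta})$ is pluriharmonic there and the principal branch of $\log$ is holomorphic on the closed right half-plane minus the origin, so $\log g$ is single-valued regardless of the topology of the domain); what it cannot reach is the endpoint case that is the paper's actual point.
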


The goal of this note is to extend these results to Hardy spaces for arbitrary domains in $\mathbb C^n$.   Let $\Omega$ be  a domain in $\mathbb C^n$.   For $0<p<\infty$,   the harmonic Hardy space $h^p(\Omega)$ is defined to be the set of all harmonic functions $u$ on $\Omega$ such that $|u|^p$ admits a harmonic majorant in $\Omega$.  For any $z_0\in \Omega$,  one may define a norm on $h^p(\Omega)$ for $1\le p<\infty$ by
$$
\|u\|_{p,z_0}:=\left( \inf \left\{ U(z_0): U \text{ is harmonic on } \Omega,  |u|^p\le U \right\}\right)^{1/p}.
$$
The holomorphic Hardy space $H^p(\Omega)$ is the set of all holomorphic functions $f$ on $\Omega$ that both ${\rm Re\,}f,{\rm Im\,}f$ are contained in  $h^p(\Omega)$.   An equivalent definition of $\|\cdot\|_{p,z_0}$ is given as follows.  
Let $\Omega$ be exhausted by an increasing family $\{\Omega_t\}$ of relatively compact domains,  containing $z_0$.   
It is well-known that  for any harmonic function $u$ on $\Omega$,  
\begin{equation}\label{eq:Lumer}
\|u\|_{p,z_0}=\sup_t \left[\int_{\partial \Omega_t} |u|^p d\omega_{z_0,t}  \right]^{1/p}
\end{equation}
(see \cite{Lumer},  Chapter 2).   Here $d\omega_{z_0,t}$  denotes the harmonic measure of $\Omega_t$ relative to $z_0$. 
Although the norms depend on the choice of $z_0$,  they are mutually equivalent. 
 If $\Omega$ is a bounded domain with $C^2-$boundary,  then 
$$
\|u\|_{p,z_0} = \left[ \int_{\partial \Omega} |u|^p  P_\Omega(z_0,\cdot) dS \right]^{1/p}
$$
for $1<p<\infty$,  
where  $P_\Omega$ denotes the Poisson kernel of\/ $\Omega$ and $dS$ the surface element on $\partial \Omega$.   Since $C^{-1}\le P_\Omega(z_0,\cdot)|_{\partial \Omega}\le C$ for suitable  constant $C>0$,  it follows that $h^p(\Omega)$ and $H^p(\Omega)$ coincide with  Hardy spaces  in the usual sense (cf.  Stein \cite{Stein}).

In the 1970's,  Stout proved the following

\begin{theorem}[cf.  \cite{Stout}]\label{th:Stout}
Let $\Omega$ be a bounded domain with $C^2-$boundary in $\mathbb C^n$.   Suppose that $f=u+iv\in \mathcal O(\Omega)$ satisfies $v(z_0)=0$\/ for some $z_0\in \Omega$ and  $u\in h^p(\Omega)$\/ for some $1<p<\infty$.  Then $f\in H^p(\Omega)$ and
$$
\|f\|_{p,z_0} \le C_{p,\Omega}\, \|u\|_{p,z_0}
$$
where  $C_{p,\Omega}$ is a constant  depending only on $p$ and $\Omega$.
\end{theorem}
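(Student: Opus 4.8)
\medskip
\noindent\textit{Strategy.}\quad The plan is to reduce the estimate to a single inequality in the complex plane, independent of $\Omega$ and of the dimension $n$. Let $\{\Omega_t\}$ be an exhaustion of $\Omega$ by relatively compact subdomains with smooth boundary, each containing $z_0$, and write $\omega_t$ for the harmonic measure of $\Omega_t$ relative to $z_0$. Fix $t$. The crucial point is that the push-forward $\mu_t:=f_*\omega_t$, a compactly supported probability measure on $\C$, is a \emph{Jensen measure} for the real point $f(z_0)=u(z_0)$ with respect to superharmonic functions on $\C$: if $S$ is superharmonic near the compact set $f(\overline{\Omega_t})$, then $S\circ f$ is superharmonic on $\Omega_t$ — because $-(S\circ f)=(-S)\circ f$ is the composition of the plurisubharmonic function $-S$ with the holomorphic map $f$ — and the super-mean-value property of harmonic measure gives
\begin{equation}\label{eq:jensenSketch}
\int_{\C}S\,d\mu_t=\int_{\partial\Omega_t}(S\circ f)\,d\omega_t\le(S\circ f)(z_0)=S\big(u(z_0)\big).
\end{equation}
Since $\int_{\C}|w|^p\,d\mu_t=\int_{\partial\Omega_t}|f|^p\,d\omega_t$ and $\int_{\C}|\mathrm{Re}\,w|^p\,d\mu_t=\int_{\partial\Omega_t}|u|^p\,d\omega_t$, and the right side of \eqref{eq:Lumer} is the supremum over $t$ of such integrals, the theorem reduces to the planar estimate
\begin{equation}\label{eq:planeSketch}
\int_{\C}|w|^p\,d\mu\le C_p\int_{\C}|\mathrm{Re}\,w|^p\,d\mu
\end{equation}
for every compactly supported probability measure $\mu$ on $\C$ that is a Jensen measure (as in \eqref{eq:jensenSketch}) for some point of $\R$, with $C_p$ depending on $p$ alone.

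\medskip
To prove \eqref{eq:planeSketch} it is enough to construct a function $V$ on $\C$ that is (i)~superharmonic, (ii)~satisfies $V(w)\ge|w|^p-C_p|\mathrm{Re}\,w|^p$ everywhere, and (iii)~satisfies $V(x)\le0$ for $x\in\R$: integrating (ii) against $\mu$ and using (i), (iii) and \eqref{eq:jensenSketch} with $S=V$ yields $\int_{\C}(|w|^p-C_p|\mathrm{Re}\,w|^p)\,d\mu\le\int_{\C}V\,d\mu\le0$. One seeks $V$ positively homogeneous of degree $p$, $V(w)=|w|^p g(\arg w)$ with $g$ a $\pi$-periodic function; then (i) becomes the distributional inequality $g''+p^2g\le0$ on the circle, with downward corners allowed (this forces $\int g\le0$, which is exactly superharmonicity of $V$ at the origin), (ii) becomes $g(\theta)\ge1-C_p|\cos\theta|^p$, and (iii) becomes $g(0)\le0$. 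For $p=2$ the choice $g(\theta)=-\cos2\theta$, i.e.\ the harmonic function $V(w)=-\mathrm{Re}(w^2)$, works with $C_2=2$. For general $1<p<\infty$ the required $g$ is furnished by the classical constructions behind M.\,Riesz's theorem \cite{Riesz} — for $1<p<2$ one patches together arcs of the curves $\theta\mapsto\alpha\cos\big(p(\theta-k\pi)\big)+\beta$ with $\alpha<0$, $\beta\le0$ and optimises the parameters; the range $p\ge2$ is reached by the dual construction — and it yields $C_p$ depending only on $p$, necessarily blowing up as $p\to1^+$ and as $p\to\infty$, in accordance with the failure at $p=\infty$.

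\medskip
Granting \eqref{eq:planeSketch}, unravelling the push-forward gives $\int_{\partial\Omega_t}|f|^p\,d\omega_t\le C_p\int_{\partial\Omega_t}|u|^p\,d\omega_t\le C_p\|u\|_{p,z_0}^p$ uniformly in $t$, by \eqref{eq:Lumer}. Since $|f|^p$ is subharmonic with bounded harmonic-measure means over $\{\Omega_t\}$, it has a harmonic majorant (the analogue of \eqref{eq:Lumer} for subharmonic functions, again from \cite{Lumer}); hence so do $|\mathrm{Re}\,f|^p$ and $|\mathrm{Im}\,f|^p$, so $f\in H^p(\Omega)$, and passing to the supremum over $t$ gives $\|f\|_{p,z_0}\le C_p^{1/p}\|u\|_{p,z_0}$ — a bound with a constant depending on $p$ alone, using neither the boundedness nor the $C^2$-smoothness of $\Omega$.

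\medskip
The Jensen-measure identity \eqref{eq:jensenSketch} and the passage through the exhaustion are routine; the whole difficulty sits in the construction of $V$ (equivalently, of the $\pi$-periodic $g$ solving $g''+p^2g\le0$, lying above $1-C_p|\cos\theta|^p$, with $g(0)\le0$) — this is the analytic core of M.\,Riesz's theorem, and is where I expect the real work, especially in treating $p\ge2$ and in pinning down $C_p$. A minor technical point: $\int_{\C}V\,d\mu_t$ is well defined because $V$ is continuous, $\mu_t$ has compact support, and $V(w)\ge-C_p|w|^p$ by (ii), so the integral lies in $(-\infty,\infty)$ and \eqref{eq:jensenSketch} applies to $S=V$ without any $\infty-\infty$ ambiguity.
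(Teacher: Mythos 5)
Your reduction to a planar inequality via the push-forward $f_*\omega_{z_0,t}$ is sound, and for $1<p\le 2$ it is in essence the same mechanism as the paper's: the paper's Lemma 2.1 asserts precisely that $w\mapsto \frac{p}{p-1}\bigl((\operatorname{Re}w)^2+\tau\bigr)^{p/2}-\bigl(|w|^2+\tau\bigr)^{p/2}$ composed with $f$ is psh, i.e.\ the paper exhibits an explicit (regularized, non-homogeneous) superharmonic majorant $V_\tau$ of $(|w|^2+\tau)^{p/2}-\frac{p}{p-1}((\operatorname{Re}w)^2+\tau)^{p/2}$ that is $\le 0$ on $\R$, and then applies Poisson--Jensen on $\Omega_t$ --- exactly your scheme with $S=V_\tau$ and $C_p=p/(p-1)$. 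Your homogeneous ansatz $V=|w|^p g(\arg w)$ also works on the range $1<p\le 2$ (one small repair: you must take $g(\pi/2)>1$ strictly, e.g.\ $g=-A\cos(p\theta)$ with $A=(1+\varepsilon)/|\cos(p\pi/2)|$ on $[0,\pi/2]$, because if $g(\pi/2)=1$ with $g'(\pi/2^-)>0$ then $g$ falls below $1-C_p|\cos\theta|^p$ linearly while the right-hand side falls only like $|\theta-\pi/2|^p$). Note also that you are proving the stronger Theorem 1.4 (arbitrary $\Omega$, constant independent of $\Omega$), which is fine.

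The genuine gap is the case $p>2$. You dispose of it with ``the range $p\ge 2$ is reached by the dual construction,'' but there is no such construction behind the classical duality argument: duality in M.~Riesz's theorem is the operator identity $\int (Hu)\,h=-\int u\,(Hh)$ for the conjugation operator on $L^2(\T)$, which transfers boundedness from $L^{p'}(\T)$ to $L^p(\T)$; it does not produce a $\pi$-periodic $g$ with $g''+p^2g\le 0$, $g(0)\le 0$, $g\ge 1-C_p|\cos\theta|^p$, and it has no analogue for the measures $\omega_{z_0,t}$ on a general $\Omega\subset\C^n$, where no conjugation operator is defined on all of $L^{p'}(\partial\Omega_t,\omega_{z_0,t})$. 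Whether a homogeneous superharmonic majorant with these properties even exists for $p>2$ is a delicate question (for $p>2$ the comparison solutions $\cos(p\theta),\sin(p\theta)$ oscillate with period $<\pi$, so the simple one-arc patching breaks down), and you have neither constructed it nor cited a source that does. This is exactly the point where both Stout and the paper switch methods: Stout slices to one variable and uses Riemann maps, while the paper derives $2<p\le 4$ from the case $1<\tilde p\le 2$ applied to $i(f^2-u(z_0)^2)$, and $p>3$ from $f^{p'}$ with $p'$ the largest odd integer below $p$, absorbing the cross terms $\int |v|^{2k\tilde p}|u|^{p-2k\tilde p}\,d\omega_{z_0,t}$ by H\"older and Young. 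Some such additional device is needed to close your argument for $p>2$.
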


\begin{remark}
Earlier,  Range-Siu \cite{RangeSiu} proved the theorem for special values $p=2,4,6,\cdots$,  and the constant can be chosen independent of\/ $\Omega$.  
\end{remark}

Our first result is the following 

\begin{theorem}\label{th:Main}
Let $\Omega$ be a  domain  in $\mathbb C^n$.   Suppose that $f=u+iv\in \mathcal O(\Omega)$ satisfies $v(z_0)=0$\/ for some $z_0\in \Omega$ and  $u\in h^p(\Omega)$\/ for some $1<p<\infty$.  Then $f\in H^p(\Omega)$ and
\begin{equation}\label{eq:Main}
\|f\|_{p,z_0} \le C_p\, \|u\|_{p,z_0}
\end{equation}
where  $C_p$ is a constant  depending only on $p$.
\end{theorem}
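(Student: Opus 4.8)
The plan is to use the exhaustion description \eqref{eq:Lumer} to reduce the whole statement to a pointwise inequality on $\C$ combined with a superharmonicity argument, and the point that makes the constant dimension‑free will be that superharmonicity survives composition with a holomorphic map into $\C$. Fix a regular (say, smooth, relatively compact) exhaustion $\{\Omega_t\}$ of $\Omega$ with $z_0\in\Omega_t$. Since $|f|$ is plurisubharmonic and $t\mapsto t^p$ is increasing and convex for $p>1$, $|f|^p$ is subharmonic, so $t\mapsto\int_{\partial\Omega_t}|f|^p\,d\omega_{z_0,t}$ is nondecreasing; its limit $L\in(0,\infty]$ is by definition $\|f\|_{p,z_0}^p$, and $f\in H^p(\Omega)$ precisely when $L<\infty$ (because $|{\rm Re}\,f|,|{\rm Im}\,f|\le|f|$). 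Hence it suffices to prove, for each fixed $t$,
\[
\int_{\partial\Omega_t}|f|^p\,d\omega_{z_0,t}\ \le\ C_p^{\,p}\int_{\partial\Omega_t}|u|^p\,d\omega_{z_0,t}\ \le\ C_p^{\,p}\,\|u\|_{p,z_0}^p ,
\]
and then let $t$ increase.

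The core is a one‑variable lemma which is essentially the analytic content of M. Riesz's theorem in the form isolated by Pichorides: for each $1<p<\infty$ there are a constant $C_p\ge 1$ and a continuous superharmonic function $\Phi_p$ on $\C$ with
\[
\Phi_p(w)\ \ge\ |w|^p-C_p^{\,p}\,|{\rm Re}\,w|^p\quad(w\in\C),\qquad \Phi_p(x)\le 0\quad(x\in\R).
\]
For $1<p\le 2$ one can write this down explicitly: with $C_p=(p/(p-1))^{1/p}$ take $\Phi_p(w):=|w|^p-\tfrac{p}{p-1}|{\rm Re}\,w|^p$; in polar coordinates $w=\rho e^{i\phi}$ one computes $\Delta\Phi_p=p^2\rho^{p-2}\bigl(1-|\cos\phi|^{p-2}\bigr)\le 0$ (the singularity along the imaginary axis has the right sign), while $\Phi_p(x)=-\tfrac1{p-1}|x|^p\le 0$. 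For $2\le p<\infty$ one uses the corresponding non‑homogeneous, piecewise‑patched Pichorides function; alternatively the case $p\ge 2$ may be derived from $1<p'\le 2$ by M. Riesz's duality argument, available here through the identity $\int_{\partial\Omega_t}{\rm Re}(fg)\,d\omega_{z_0,t}={\rm Re}\bigl(f(z_0)g(z_0)\bigr)$ for holomorphic $f,g$ — but one must then beware that pluriharmonic conjugates need not exist on a general $\Omega\subset\C^n$, which is why the direct construction is preferable.

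Granting the lemma, the transfer to $\C^n$ is immediate: if $\Phi\in C^2(\C)$ and $f\in\mathcal O(\Omega)$ with $\Omega\subset\C^n$, then $\partial_{z_j}\partial_{\bar z_j}(\Phi\circ f)=(\partial_w\partial_{\bar w}\Phi)(f)\,|\partial_{z_j}f|^2$, hence $\Delta(\Phi\circ f)=(\Delta\Phi)(f)\sum_j|\partial_{z_j}f|^2\le 0$; a general superharmonic $\Phi_p$ is handled by approximation by smooth superharmonic functions decreasing to it. Thus $\Phi_p\circ f$ is continuous and superharmonic on $\Omega$, so on a neighbourhood of each $\overline{\Omega_t}$, whence $\int_{\partial\Omega_t}\Phi_p(f)\,d\omega_{z_0,t}\le\Phi_p\bigl(f(z_0)\bigr)$. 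But $f(z_0)=u(z_0)+iv(z_0)=u(z_0)\in\R$ by hypothesis, so $\Phi_p(f(z_0))\le 0$. Integrating the pointwise bound $|f|^p\le C_p^{\,p}|u|^p+\Phi_p\circ f$ against $d\omega_{z_0,t}$ and using this yields the displayed estimate for every $t$, and taking the supremum gives \eqref{eq:Main} together with $f\in H^p(\Omega)$. (If the lemma is only available with $\Phi_p(x)\le M|x|^p$ on $\R$, the extra term is absorbed by $|u(z_0)|^p\le\int_{\partial\Omega_t}|u|^p\,d\omega_{z_0,t}$, replacing $C_p^p$ by $C_p^p+M$.)

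The reduction and the $\C^n\to\C$ step are soft; the real difficulty is the one‑variable construction of $\Phi_p$ for \emph{all} $p\in(1,\infty)$, in particular for $p\ge 2$, where the homogeneous ansatz breaks down ($\Delta\Phi_p>0$ near the imaginary axis) and one must patch in a piece of the form $B\cos\bigl(p(\phi-\tfrac\pi2)\bigr)$, checking that no positive distributional mass is created in $\Delta\Phi_p$ at the junctions. A minor technical matter is to take the exhaustion regular enough that $\int_{\partial\Omega_t}h\,d\omega_{z_0,t}\le h(z_0)$ holds for continuous superharmonic $h$ (automatic for a smooth relatively compact exhaustion). As a sanity check, for $p=2$ one has $\Phi_2(w)=-{\rm Re}(w^2)$, which is harmonic, and the argument recovers $\int_{\partial\Omega_t}|v|^2\,d\omega_{z_0,t}=\int_{\partial\Omega_t}|u|^2\,d\omega_{z_0,t}-u(z_0)^2$ and $C_2=\sqrt2$.
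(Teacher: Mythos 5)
Your overall strategy is sound, and for $1<p\le 2$ your argument is essentially the paper's: your function $\Phi_p(w)=|w|^p-\tfrac{p}{p-1}|{\rm Re}\,w|^p$ composed with $f$ is exactly the negative of the plurisubharmonic function in the paper's Lemma 2.1 (the paper regularizes with $(|f|^2+\tau)^{p/2}$ and lets $\tau\to 0+$ instead of treating the singular set $\{{\rm Re}\,w=0\}$ distributionally, but the computation and the constant $C_p=(\tfrac{p}{p-1})^{1/p}$ are identical), and the Poisson--Jensen step is the same. The transfer from $\C$ to $\C^n$ via $\Delta(\Phi\circ f)=(\Delta\Phi)(f)\sum_j|\partial_{z_j}f|^2$ is also exactly the mechanism the paper exploits.

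The genuine gap is the case $p>2$. There your entire proof rests on the existence of a continuous superharmonic $\Phi_p$ on $\C$ with $\Phi_p(w)\ge |w|^p-C_p^p|{\rm Re}\,w|^p$ and $\Phi_p\le 0$ on $\R$, and you do not construct it: you say ``one uses the corresponding non-homogeneous, piecewise-patched Pichorides function'' and then yourself identify this construction --- including the verification that no positive distributional mass appears at the junctions of the patch $B\cos\bigl(p(\phi-\tfrac{\pi}{2})\bigr)$ --- as ``the real difficulty.'' As written, the homogeneous ansatz indeed fails for $p>2$ ($\Delta\Phi_p=p^2\rho^{p-2}(1-|\cos\phi|^{p-2})>0$ near the imaginary axis), the duality alternative is correctly rejected by you (pluriharmonic completion is unavailable on a general $\Omega\subset\C^n$), and so the theorem is only proved for $1<p\le 2$. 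Such functions do exist in the classical one-variable literature (Ess\'en, Pichorides, Verbitsky), so the gap is fillable with a precise citation or a careful patching argument, but it is not filled here. It is worth noting that the paper avoids this difficulty entirely: for $2<p\le 4$ it applies the case $\tilde p=p/2\in(1,2]$ to $\tilde f=f^2$ (more precisely to $i(f^2-u(z_0)^2)$, whose imaginary part $2uv$ vanishes at $z_0$), and for $3<p<\infty$ to $\tilde f=f^{p'}$ with $p'$ the largest odd integer below $p$, absorbing the resulting mixed terms $\int|v|^{2k\tilde p}|u|^{p-2k\tilde p}$ by H\"older and Young; this bootstrap needs no new one-variable superharmonic function, at the cost of a non-sharp constant. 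Two minor points: smooth mollifications of a superharmonic function increase to it (you wrote ``decreasing''), which is harmless since the mollified functions are used only to certify superharmonicity of $\Phi_p\circ f$; and your reduction of membership in $H^p(\Omega)$ to finiteness of $\sup_t\int_{\partial\Omega_t}|f|^p\,d\omega_{z_0,t}$ is fine.
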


In \cite{Stout},  Stout deduced  Theorem \ref{th:Stout} from Theorem \ref{th:Riesz} by using the slicing technique and Riemann mappings in a rather delicate way,  and the $C^2$ condition of $\partial \Omega$ is essential in his proof.   Our approach is more direct: The case $1<p\le 2$ follows by a straightforward application of  the  Poisson-Jensen formula (which is similar as Duren \cite{Duren}),  with $C_p=(\frac{p}{p-1})^{1/p}$;  while  the case $2<p<\infty$ can be derived from  the case $1<p\le 2$,  but with a worse constant.  Moreover,  a similar argument as the case $1<p\le 2$  yields the following

\begin{theorem}\label{th:Zygmund}
Let $\Omega$ be a  domain  in $\mathbb C^n$.   If $f=u+iv\in \mathcal O(\Omega)$ satisfies $|u|\le 1$ and $v(z_0)=0$ for some $z_0\in \Omega$,  then for any $\alpha>1$,  there exists a constant $C_\alpha>0$ such that 
\begin{equation}\label{eq:Zygmund}
\int_{\partial \Omega_t} \frac{\exp\left(\frac{\pi}2 |f| \right)}{(1+|f|)^\alpha}\, d\omega_{z_0,t} \le C_\alpha
\end{equation}  
holds for  any exhaustion $\{\Omega_t\}$ of\/ $\Omega$ with $\Omega_t\ni z_0$.
\end{theorem}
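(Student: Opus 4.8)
The plan is to pass to a half-plane by an exponential substitution and then to recover the sharp exponent $\tfrac\pi2$ by averaging a one-parameter family of subcritical estimates.

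First I would introduce, for $0\le s\le 1$, the holomorphic functions $G_s:=e^{-\frac{\pi}{2}isf}$ and $\widetilde G_s:=e^{\frac{\pi}{2}isf}$ on $\Omega$. Because $|u|\le1$ one has $|G_s|=e^{\frac{\pi}{2}sv}$ and
$$
\operatorname{Re}G_s=e^{\frac{\pi}{2}sv}\cos\!\big(\tfrac{\pi}{2}su\big)\ \ge\ \cos\!\big(\tfrac{\pi s}{2}\big)\,|G_s|\ \ge\ 0 ,
$$
and the same for $\widetilde G_s$ with $v$ replaced by $-v$; moreover $v(z_0)=0$ gives $|G_s(z_0)|=|\widetilde G_s(z_0)|=1$. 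Since $\operatorname{Re}G_s$ is harmonic on $\Omega$, integrating it against $d\omega_{z_0,t}$ reproduces its value at $z_0$ (cf. \eqref{eq:Lumer}), so for every $0<s<1$ and every $t$,
$$
\int_{\partial\Omega_t}e^{\frac{\pi}{2}sv}\,d\omega_{z_0,t}\ \le\ \frac{1}{\cos(\frac{\pi s}{2})}\int_{\partial\Omega_t}\operatorname{Re}G_s\,d\omega_{z_0,t}\ =\ \frac{\operatorname{Re}G_s(z_0)}{\cos(\frac{\pi s}{2})}\ \le\ \frac{1}{\cos(\frac{\pi s}{2})},
$$
and likewise with $v$ replaced by $-v$. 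For a fixed $s<1$ this is already the analogue, for a general $\Omega$, of Zygmund's theorem with subcritical exponent $\tfrac{\pi s}{2}<\tfrac\pi2$, and it is of the same ``positive real part plus harmonic measure'' type as the $1<p\le2$ case of Theorem~\ref{th:Main}.

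The sharp exponent forces $s\uparrow1$, where $1/\cos(\tfrac{\pi s}{2})$ blows up; the factor $(1+|f|)^{\alpha}$ with $\alpha>1$ is exactly what makes the average over $s$ converge. From $|v|\le|f|\le1+|v|$ it suffices to bound $\int_{\partial\Omega_t}\frac{e^{\frac{\pi}{2}|v|}}{(1+|v|)^{\alpha}}\,d\omega_{z_0,t}$, hence, splitting $\partial\Omega_t$ into $\{v\ge0\}$ and $\{v<0\}$, the two integrals $\int_{\partial\Omega_t}\frac{e^{\pm\frac{\pi}{2}v}}{(1+|v|)^{\alpha}}\,d\omega_{z_0,t}$. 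The elementary fact I would use is that for each $\alpha>0$ there is $c_\alpha>0$ with
$$
\frac{r}{(1+\log r)^{\alpha}}\ \le\ c_\alpha^{-1}\int_0^1(1-s)^{\alpha-1}r^{s}\,ds ,\qquad r\ge1 ,
$$
which comes from the substitution $\tau=(1-s)\log r$: it turns the integral into $\tfrac{r}{(\log r)^{\alpha}}\int_0^{\log r}\tau^{\alpha-1}e^{-\tau}\,d\tau$, so the quotient of the two sides is continuous on $[1,\infty)$ with limit $\Gamma(\alpha)^{-1}$ at infinity, hence bounded below. Taking $r=e^{\frac{\pi}{2}v}$ on $\{v\ge0\}$ (absorbing the harmless constants relating $1+\tfrac{\pi}{2}v$ and $1+v$), interchanging $\int_0^1(\cdot)\,ds$ with $\int_{\partial\Omega_t}(\cdot)\,d\omega_{z_0,t}$ by Tonelli, and feeding in the estimate of the previous paragraph, one obtains
$$
\int_{\partial\Omega_t}\frac{e^{\frac{\pi}{2}|v|}}{(1+|v|)^{\alpha}}\,d\omega_{z_0,t}\ \le\ C\int_0^1\frac{(1-s)^{\alpha-1}}{\cos(\frac{\pi s}{2})}\,ds ,
$$
with $C=C(\alpha)$; this is the bound \eqref{eq:Zygmund}, and it does not depend on $t$ or on the exhaustion.

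The one genuinely delicate point is this last display: no single $s<1$ can work, and one must observe that $\int_0^1(1-s)^{\alpha-1}(\cos\tfrac{\pi s}{2})^{-1}\,ds<\infty$ \emph{precisely} when $\alpha>1$, since near $s=1$ the integrand is comparable to $(1-s)^{\alpha-2}$. Everything else is routine: the positivity of $\operatorname{Re}G_s$, the mean value identity against harmonic measure (which needs no boundary regularity of $\partial\Omega_t$, only that $\operatorname{Re}G_s$ is harmonic in a neighbourhood of $\overline{\Omega_t}$), the reduction from $|f|$ to $|v|$, and the Tonelli interchange.
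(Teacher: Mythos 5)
Your proof is correct, but it takes a genuinely different route from the paper's. Both arguments exploit $|u|\le 1$ by composing with the exponential map onto the right half-plane, but the mechanisms diverge from there. The paper proves a standalone Kolmogorov-type estimate (Proposition~\ref{prop:Kolmogorov}) for holomorphic $f$ with $\operatorname{Re}f>0$: it exhibits the explicit plurisubharmonic combination $-\frac{\alpha+1}{\alpha-1}\lambda_{\alpha-1}(\log u^2)-\lambda_\alpha(\log|f|^2)$ via a Hessian computation and feeds it into the Poisson--Jensen formula, then applies the result to $F_\pm=\exp(\pm\frac{\pi i}{2}f)$ (first for $|u|<1$, then passing to $|u|\le1$ via $rf$, $r\to1-$). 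You instead use only the positivity and reproducing property of harmonic measure applied to the one-parameter family $e^{\mp\frac{\pi}{2}isf}$, obtaining the subcritical bounds $\int_{\partial\Omega_t}e^{\pm\frac{\pi}{2}sv}\,d\omega_{z_0,t}\le\sec(\frac{\pi s}{2})$ for $s<1$, and you recover the sharp exponent by averaging in $s$ against $(1-s)^{\alpha-1}\,ds$; the convergence of $\int_0^1(1-s)^{\alpha-1}\sec(\frac{\pi s}{2})\,ds$ exactly when $\alpha>1$ is where the hypothesis enters, which makes its role more transparent than in the paper. Your approach is more elementary (no second-derivative computations, no psh comparison function, no limiting step for $|u|\le1$) and would transfer to the Poletsky--Stessin setting of Theorem~\ref{th:Z_2} as well, since $d\mu_{z_0,t}$ reproduces pluriharmonic functions by the Lelong--Jensen formula \eqref{eq:Lelong-Jensen}; what the paper's route buys is Proposition~\ref{prop:Kolmogorov} itself, a quantitative weak-type estimate for positive-real-part functions with explicit constants, which is of independent interest. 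Two cosmetic points: the quotient in your elementary lemma needs to be bounded \emph{above} (equivalently, the integral bounded below relative to $\frac{r}{(1+\log r)^{\alpha}}$), which your continuity-plus-limit argument does deliver; and the reproducing identity $\int_{\partial\Omega_t}\operatorname{Re}G_s\,d\omega_{z_0,t}=\operatorname{Re}G_s(z_0)$ is not literally \eqref{eq:Lumer} but the same Poisson--Jensen/PWB fact the paper already invokes, so nothing is missing.
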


\begin{remark}
The function $f(z)=\frac{2}{\pi i} \log \frac{1+z}{1-z}$ is holomorphic on $\mathbb D$ and satisfies $|{\rm Re\,}f| \le 1$,  but a simple calculation shows 
$$
\int_0^{2\pi} \frac{\exp\left( \frac{\pi}2 |f(e^{i\theta})|\right)}{\log (|f(e^{i\theta})|+1)}\, d\theta=\infty.
$$
\end{remark}

There is another  generalization of Hardy spaces  due to Poletsky-Stessin \cite{PS}.  Let $\Omega$ be a hyperconvex domain in $\mathbb C^n$,  i.e.,  there exists a negative continuous plurisubharmonic (psh) function $\rho$ on $\Omega$ such that $\{\rho<c\}\subset\subset \Omega$ for all $c<0$.  The pluricomplex Green function  on $\Omega$ is defined by
$$
g_\Omega(\zeta,z)=\sup \left\{u(\zeta): u\in PSH^-(\Omega), \limsup_{\zeta\rightarrow z}(u(\zeta)-\log|\zeta-z|)<\infty \right\}
$$ 
where $PSH^-(\Omega)$ denotes the set of negative psh functions on $\Omega$.  Given $z_0 \in \Omega$ and $t>0$,  define 
$$
\Omega_t:=\{z\in \Omega: g_\Omega(\cdot,z_0)<-t\},\ \ \ g_{z_0,t}(z):=\max\{-t,g_\Omega(\cdot,z_0)\}. 
$$ 
According to Demailly \cite{Demailly},  $g_\Omega(z,w)$ is continuous on $\overline{\Omega}\times \Omega \setminus \{z=w\}$ and maximal on $\Omega\setminus \{w\}$;  moreover,  
 the so-called Lelong-Jensen formula gives
\begin{equation}\label{eq:Lelong-Jensen}
\int_\Omega \varphi (dd^c g_{z_0,t})^n -(2\pi)^n \varphi(z_0) = \int_{\Omega_t} (-t-g_\Omega(\cdot,z_0)) dd^c \varphi \wedge (dd^c g_\Omega(\cdot,z_0))^{n-1}
\end{equation}
for any continuous psh function $\varphi$ on $\Omega$.  Here $d\mu_{z_0,t}:=(dd^c g_{z_0,t})^n$ is called the pluriharmonic measure of $\Omega_t$ relative to $z_0$,   which is supported on $\partial \Omega_t$ with total measure $(2\pi)^n$.  The pluriharmonic  Poletsky-Stessin-Hardy space $h^p_{PS}(\Omega)$ is the set of all pluriharmonic functions $u$ on $\Omega$ such that
$$
\|u\|_{p,z_0}^{PS}:=\sup_{t>0}\left[ \int_{\Omega_t} |u|^p d\mu_{z_0,t} \right]^{1/p}<\infty.
$$
The holomorphic Poletsky-Stessin-Hardy space $H^p_{PS}(\Omega)$ is the set of all holomorphic functions $f$ on $\Omega$ such that both ${\rm Re\,}f,{\rm Im\,}f$ are contained in $h^p_{PS}(\Omega)$.   

We have the following analogue of Theorem \ref{th:Main} and Theorem \ref{th:Zygmund} respectively.  

\begin{theorem}\label{th:Main_2}
Let $\Omega$ be a hyperconvex domain  in $\mathbb C^n$.   Suppose that $f=u+iv\in \mathcal O(\Omega)$ satisfies $v(z_0)=0$\/ for some $z_0\in \Omega$ and  $u\in h^p_{PS}(\Omega)$\/ for some $1<p<\infty$.  Then $f\in H^p_{PS}(\Omega)$ and
\begin{equation}\label{eq:Main_2}
\|f\|_{p,z_0}^{PS} \le C_p\, \|u\|_{p,z_0}^{PS}
\end{equation}
where  $C_p$ is a constant  depending only on $p$.
\end{theorem}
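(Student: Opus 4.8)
The plan is to follow the proof of Theorem~\ref{th:Main}, with the Lelong--Jensen formula \eqref{eq:Lelong-Jensen} playing the role of the Poisson--Jensen formula. I first treat the range $1<p\le2$, where the whole argument reduces to one observation: the function
\[
\varphi:=\frac{p}{p-1}\,|u|^{p}-|f|^{p}
\]
is continuous and plurisubharmonic on $\Omega$. Continuity is immediate from $p>1$. For plurisubharmonicity, write $\varphi=\Phi(u,v)$ with $\Phi(a,b)=\frac{p}{p-1}|a|^{p}-(a^{2}+b^{2})^{p/2}$ on $\mathbb R^{2}$; restricting to an arbitrary complex line and using the Cauchy--Riemann equations one gets, with $f'$ the derivative of $f$ along that line,
\[
\Delta\bigl(\Phi(u,v)\bigr)=|f'|^{2}\,\bigl(\Delta_{a,b}\Phi\bigr)(u,v),\qquad
\Delta_{a,b}\Phi=p^{2}\bigl(|a|^{p-2}-(a^{2}+b^{2})^{(p-2)/2}\bigr),
\]
and the right-hand side is $\ge0$ since $|a|\le(a^{2}+b^{2})^{1/2}$ and $p-2\le0$. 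Some care is needed along $\{u=0\}$ and $\{f=0\}$, where $\varphi$ is not smooth: there the identities must be read distributionally, but for $p>1$ neither $\Delta|a|^{p}$ nor $\Delta(a^{2}+b^{2})^{p/2}$ carries a singular part, so $\Delta_{a,b}\Phi\ge0$ as a measure and, $\varphi$ being continuous, $\Phi$ is subharmonic on $\mathbb R^{2}$ and $\varphi$ plurisubharmonic on $\Omega$. This verification is the one step that is not purely formal, and it uses $p\le2$ in an essential way.

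Granting this, I would apply \eqref{eq:Lelong-Jensen} to $\varphi$. On $\Omega_{t}$ one has $-t-g_{\Omega}(\cdot,z_{0})\ge0$, and $dd^{c}\varphi\wedge(dd^{c}g_{\Omega}(\cdot,z_{0}))^{n-1}$ is a positive measure (Demailly \cite{Demailly}), so the right-hand side of \eqref{eq:Lelong-Jensen} is $\ge0$, whence
\[
\int_{\Omega_{t}}\varphi\,d\mu_{z_{0},t}\ \ge\ (2\pi)^{n}\varphi(z_{0}).
\]
Since $v(z_{0})=0$ we have $f(z_{0})=u(z_{0})\in\mathbb R$, hence $\varphi(z_{0})=\frac{1}{p-1}|u(z_{0})|^{p}\ge0$, and therefore
\[
\int_{\Omega_{t}}|f|^{p}\,d\mu_{z_{0},t}\ \le\ \frac{p}{p-1}\int_{\Omega_{t}}|u|^{p}\,d\mu_{z_{0},t}\ \le\ \frac{p}{p-1}\bigl(\|u\|^{PS}_{p,z_{0}}\bigr)^{p}
\]
for every $t>0$. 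Taking the supremum over $t$ shows $|v|\le|f|\in h^{p}_{PS}(\Omega)$, hence $f\in H^{p}_{PS}(\Omega)$, together with \eqref{eq:Main_2} for $1<p\le2$ with $C_{p}=\bigl(\tfrac{p}{p-1}\bigr)^{1/p}$.

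For $2<p<\infty$ the function $\varphi$ above need no longer be plurisubharmonic, so I would instead argue by duality, as in the classical case. If $f_{j}=u_{j}+iv_{j}\in\mathcal O(\Omega)$ with $v_{j}(z_{0})=0$ ($j=1,2$), then $f_{1}f_{2}$ is holomorphic, so $\pm\,\mathrm{Im}(f_{1}f_{2})$ is pluriharmonic and \eqref{eq:Lelong-Jensen} (whose right-hand side now vanishes) gives
\[
\int_{\Omega_{t}}\bigl(u_{1}v_{2}+v_{1}u_{2}\bigr)\,d\mu_{z_{0},t}=\int_{\Omega_{t}}\mathrm{Im}(f_{1}f_{2})\,d\mu_{z_{0},t}=(2\pi)^{n}\,\mathrm{Im}\bigl(f_{1}(z_{0})f_{2}(z_{0})\bigr)=0 .
\]
Thus the conjugation $u\mapsto v$ (normalized by $v(z_{0})=0$) is skew-symmetric for the bilinear form $\langle g_{1},g_{2}\rangle_{t}=\int_{\Omega_{t}}g_{1}g_{2}\,d\mu_{z_{0},t}$; by the standard duality argument the bound already obtained for the conjugate exponent $p'=\frac{p}{p-1}\in(1,2)$ transfers to $\|v\|^{PS}_{p,z_{0}}\le C_{p'}\|u\|^{PS}_{p,z_{0}}$, and then $|f|\le|u|+|v|$ yields \eqref{eq:Main_2} for $2<p<\infty$ with $C_{p}=1+C_{p'}$ (no longer sharp). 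The main obstacle I anticipate in this range is making the duality step rigorous on a general hyperconvex $\Omega$ — i.e.\ verifying that $u\mapsto v$ extends to a bounded operator between the relevant $L^{p}(d\mu_{z_{0},t})$-closures and that the pairing above is non-degenerate enough to run the argument.
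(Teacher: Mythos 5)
Your treatment of the range $1<p\le 2$ is correct and is essentially the paper's own argument: the paper proves (Lemma \ref{lm:Riesz-type}) that $\frac{p}{p-1}(u^2+\tau)^{p/2}-(|f|^2+\tau)^{p/2}$ is psh, applies \eqref{eq:Lelong-Jensen} to the two regularized terms, and lets $\tau\to 0+$; you work instead with the unregularized difference $\frac{p}{p-1}|u|^p-|f|^p$ and justify its plurisubharmonicity by a distributional computation. That is a harmless variant (it also follows from the Lemma by letting $\tau\to 0+$, the convergence being locally uniform), and your constant $(p/(p-1))^{1/p}$ matches the paper's.

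The range $2<p<\infty$ is where your proposal has a genuine gap, and you have correctly located it yourself. The skew-symmetry identity $\int_{\partial\Omega_t}(u_1v_2+v_1u_2)\,d\mu_{z_0,t}=0$ is fine, but to convert the bound for the conjugate exponent into a bound for $p$ by duality you must realize $\|v_1\|_{L^p(d\mu_{z_0,t})}$ as a supremum of $\bigl|\int_{\partial\Omega_t} v_1u_2\,d\mu_{z_0,t}\bigr|$ over test functions $u_2$ with $\|u_2\|_{L^{p'}}\le 1$ which are themselves real parts of holomorphic functions on $\Omega$ — otherwise the identity is unavailable. On the circle this works because trigonometric polynomials are real parts of holomorphic polynomials plus conjugates, so the admissible test class is dense and norming in $L^{p'}(\T)$. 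On a hyperconvex domain in $\C^n$ with $n\ge 2$, boundary values of pluriharmonic functions form a subspace of infinite codimension in $L^{p'}(\partial\Omega_t,d\mu_{z_0,t})$, and there is no reason the supremum over that subspace should dominate a constant times $\|v_1\|_{L^p}$; this is precisely the ``non-degeneracy of the pairing'' you flag as an obstacle, and it is not a removable technicality. The paper avoids duality entirely: for $2<p\le 4$ it applies the case $\tilde p=p/2\in(1,2]$ to $\tilde f=f^2$ (after forming $\tilde g=i(\tilde f-u(z_0)^2)$ so that the normalization $\mathrm{Im}\,\tilde g(z_0)=0$ is restored), and for $3<p<\infty$ to $\tilde f=f^{p'}$ with $p'$ the largest odd integer below $p$, then absorbs the mixed terms $\int|v|^{2k\tilde p}|u|^{p-2k\tilde p}\,d\mu_{z_0,t}$ by H\"older's and Young's inequalities; this uses only the measure and the already-proved case $1<p\le 2$, so it transfers verbatim from Theorem \ref{th:Main} to the Poletsky--Stessin setting. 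You should replace the duality step by this bootstrap.
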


\begin{theorem}\label{th:Z_2}
Let $\Omega$ be a hyperconvex domain  in $\mathbb C^n$.   If $f=u+iv\in \mathcal O(\Omega)$ satisfies $|u|\le 1$ and $v(z_0)=0$ for some $z_0\in \Omega$,  then for any $\alpha>0$,  there exists a constant $C_\alpha>0$ such that 
$$
\int_{\partial \Omega_t} \frac{\exp\left(\frac{\pi}2 |f| \right)}{(1+|f|)^\alpha}\, d\mu_{z_0,t} \le C_\alpha.
$$  
\end{theorem}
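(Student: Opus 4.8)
The plan is to run the proof of Theorem~\ref{th:Zygmund}, with the Lelong--Jensen formula \eqref{eq:Lelong-Jensen} in the role played there by the mean value property of harmonic measure, and to reduce everything to an estimate for $v=\operatorname{Im}f$ alone. Since $|u|\le1$ we have $|v|\le|f|\le\sqrt{1+v^{2}}\le1+|v|$, whence
\[
\frac{\exp(\tfrac{\pi}{2}|f|)}{(1+|f|)^{\alpha}}\ \le\ e^{\pi/2}\,\frac{e^{(\pi/2)|v|}}{(1+|v|)^{\alpha}}\ \le\ e^{\pi/2}\Bigl(\frac{e^{(\pi/2)v}}{(1+|v|)^{\alpha}}+\frac{e^{-(\pi/2)v}}{(1+|v|)^{\alpha}}\Bigr).
\]
Because $-f=(-u)+i(-v)$ again satisfies the hypotheses, it is enough to bound $\int_{\partial\Omega_{t}}\frac{e^{(\pi/2)v}}{(1+|v|)^{\alpha}}\,d\mu_{z_{0},t}$; on $\{v\le0\}$ the integrand is $\le1$ and $\mu_{z_{0},t}$ has total mass $(2\pi)^{n}$, so only the contribution of $\{v>0\}$, where the integrand equals $\frac{e^{(\pi/2)v}}{(1+v)^{\alpha}}$, requires work.

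The key device is the holomorphic family $F_{\lambda}:=e^{-i\lambda f}$ for $0<\lambda<\pi/2$. Since $|u|\le1$ forces $\cos(\lambda u)\ge\cos\lambda>0$, and $\operatorname{Re}F_{\lambda}=e^{\lambda v}\cos(\lambda u)$, one has on all of $\Omega$ the pointwise inequality $e^{\lambda v}=|F_{\lambda}|\le(\cos\lambda)^{-1}\operatorname{Re}F_{\lambda}$. Now $\operatorname{Re}F_{\lambda}$ is pluriharmonic, hence both $\pm\operatorname{Re}F_{\lambda}$ are continuous psh on $\Omega$, so applying \eqref{eq:Lelong-Jensen} to $\varphi=\pm\operatorname{Re}F_{\lambda}$ (for which $dd^{c}\varphi=0$) gives the exact identity
\[
\int_{\partial\Omega_{t}}\operatorname{Re}F_{\lambda}\,d\mu_{z_{0},t}=(2\pi)^{n}\operatorname{Re}F_{\lambda}(z_{0})=(2\pi)^{n}\cos(\lambda u(z_{0}))\le(2\pi)^{n},
\]
using $v(z_{0})=0$. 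Combined with the pointwise inequality this yields $\int_{\partial\Omega_{t}}e^{\lambda v}\,d\mu_{z_{0},t}\le(2\pi)^{n}/\cos\lambda$ for every $t>0$ and every $\lambda\in(0,\pi/2)$.

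To pass from this one-parameter family of $L^{1}$ bounds to an exponential-type estimate, use the elementary identity, for $v\ge0$,
\[
\frac{e^{(\pi/2)v}}{(1+v)^{\alpha}}=\frac{1}{\Gamma(\alpha)}\int_{0}^{\infty}e^{-s}s^{\alpha-1}e^{(\pi/2-s)v}\,ds\ \le\ 1+\frac{1}{\Gamma(\alpha)}\int_{0}^{\pi/2}\Bigl(\tfrac{\pi}{2}-\lambda\Bigr)^{\alpha-1}e^{\lambda v}\,d\lambda,
\]
obtained by dropping the range $s>\pi/2$ and substituting $\lambda=\pi/2-s$. Integrating against $d\mu_{z_{0},t}$, using Fubini and the bound just proved,
\[
\int_{\{v>0\}}\frac{e^{(\pi/2)v}}{(1+v)^{\alpha}}\,d\mu_{z_{0},t}\ \lesssim\ (2\pi)^{n}\Bigl(1+\int_{0}^{\pi/2}\frac{(\tfrac{\pi}{2}-\lambda)^{\alpha-1}}{\cos\lambda}\,d\lambda\Bigr).
\]
Since $\cos\lambda\sim\tfrac{\pi}{2}-\lambda$ as $\lambda\uparrow\pi/2$, the last integral converges precisely when $\alpha>1$; this already proves the inequality for $\alpha>1$, and — run verbatim with $\omega_{z_{0},t}$ and \eqref{eq:Lumer} in place of $\mu_{z_{0},t}$ and \eqref{eq:Lelong-Jensen} (the mean value identity replacing Lelong--Jensen) — it reproves Theorem~\ref{th:Zygmund}.

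The main obstacle is to reach the full range $\alpha>0$, which requires sharpening $\int_{\partial\Omega_{t}}e^{\lambda v}\,d\mu_{z_{0},t}=O(1/\cos\lambda)$ to, say, $O(\log\tfrac{1}{\cos\lambda})$ (or $O_{\varepsilon}((\cos\lambda)^{-\varepsilon})$ for all $\varepsilon>0$) as $\lambda\uparrow\pi/2$; one cannot simply discard the Monge--Amp\`ere remainder. Here I would instead apply \eqref{eq:Lelong-Jensen} to the psh function $\varphi=e^{\lambda v}$, for which $dd^{c}\varphi=\lambda^{2}e^{\lambda v}\,dv\wedge d^{c}v$, to obtain
\[
\int_{\partial\Omega_{t}}e^{\lambda v}\,d\mu_{z_{0},t}=(2\pi)^{n}+\lambda^{2}\int_{\Omega_{t}}\bigl(-t-g_{\Omega}(\cdot,z_{0})\bigr)e^{\lambda v}\,dv\wedge d^{c}v\wedge\bigl(dd^{c}g_{\Omega}(\cdot,z_{0})\bigr)^{n-1},
\]
and then estimate the remainder uniformly in $t$: rewrite $dv\wedge d^{c}v=dd^{c}(\tfrac12v^{2})$, integrate by parts against $-t-g_{\Omega}(\cdot,z_{0})$, and feed in $e^{\lambda v}\le(\cos\lambda)^{-1}\operatorname{Re}F_{\lambda}$ together with $dd^{c}\operatorname{Re}F_{\lambda}=0$. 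The point is that the extra positive current $(dd^{c}g_{\Omega}(\cdot,z_{0}))^{n-1}$ — a genuinely higher-dimensional ingredient, absent when $n=1$, where the extremal $\tfrac{2}{\pi i}\log\tfrac{1+z}{1-z}$ shows $1/\cos\lambda$ is sharp — forces the remainder to grow more slowly in $\lambda$, and hence lets the $\lambda$-integral converge for all $\alpha>0$. Carrying out this pluripotential-theoretic estimate of the remainder is the crux; the rest is the routine one-variable-style bookkeeping above. (The same scheme, applied to $\varphi=|f|^{p}$ and $\varphi=|u|^{p}$ exactly as Duren handles the case $1<p\le2$ of Theorem~\ref{th:Main}, gives Theorem~\ref{th:Main_2}.)
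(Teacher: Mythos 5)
Your argument for the range $\alpha>1$ is correct and complete, and it takes a genuinely different route from the paper. The paper deduces the theorem from Proposition \ref{prop:K_2}, a Kolmogorov-type estimate for $|f|/\log^\alpha|f+e^{\alpha+1}|^2$ when ${\rm Re\,}f>0$, which is proved by exhibiting the explicit plurisubharmonic combination $\phi=-\frac{\alpha+1}{\alpha-1}\lambda_{\alpha-1}(\log u^2)-\lambda_\alpha(\log|f|^2)$ and feeding it into \eqref{eq:Lelong-Jensen}; the half-plane maps $e^{\pm\frac{\pi i}{2}f}$ then convert this into the stated bound. You instead use the one-parameter family $F_\lambda=e^{-i\lambda f}$, extract the sharp $L^1$ bound $\int_{\partial\Omega_t}e^{\lambda v}\,d\mu_{z_0,t}\le(2\pi)^n/\cos\lambda$ from the mean-value identity for the pluriharmonic function ${\rm Re\,}F_\lambda$ (no comparison lemma needed, only $dd^c{\rm Re\,}F_\lambda=0$ in \eqref{eq:Lelong-Jensen}), and then average over $\lambda$ against $(\frac{\pi}{2}-\lambda)^{\alpha-1}d\lambda$ via the Gamma integral for $(1+v)^{-\alpha}$. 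This is the classical one-variable proof of Zygmund's theorem transplanted verbatim, and it is arguably cleaner than the paper's; as you note, it also reproves Theorem \ref{th:Zygmund}.

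On the discrepancy you flag: the ``$\alpha>0$'' in the statement is surely a typo for ``$\alpha>1$''. The paper's own proof reads ``Granted Proposition \ref{prop:K_2}, the argument is parallel to Theorem \ref{th:Zygmund}'', and Proposition \ref{prop:K_2} requires $\alpha>1$, so the paper proves no more than you do. Moreover the statement is genuinely false for $\alpha\le1$: the unit disc (and the polydisc, whose pluriharmonic measure at the origin is normalized Lebesgue measure on the torus) is hyperconvex, and the extremal function $f=\frac{2}{\pi i}\log\frac{1+z}{1-z}$ of the paper's own Remark satisfies $e^{\frac{\pi}{2}|f|}\asymp|1-e^{i\theta}|^{-1}$ with $1+|f|\asymp\log\frac{1}{|1-e^{i\theta}|}$ near $\theta=0$, so the integral diverges for every $\alpha\le1$. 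Consequently your closing paragraph --- the proposal to sharpen $O(1/\cos\lambda)$ to $O(\log\frac{1}{\cos\lambda})$ using the positivity of $(dd^cg_\Omega(\cdot,z_0))^{n-1}$ --- is a dead end and should be deleted; no such improvement can hold, since the $n=1$ extremal example embeds into every dimension. With ``$\alpha>1$'' in place of ``$\alpha>0$'' your proof stands as written.
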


\section{Proof of Theorem \ref{th:Main}}

We need the following elementary lemma.

\begin{lemma}\label{lm:Riesz-type}
For  $1<p\le 2$ and $\tau>0$,    $\frac{p}{p-1}(u^2+\tau)^{p/2}-(|f|^2+\tau)^{p/2}$ is psh on $\Omega$.  
\end{lemma}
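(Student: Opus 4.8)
The plan is to reduce the plurisubharmonicity of
$\Phi_\tau:=\frac{p}{p-1}(u^2+\tau)^{p/2}-(|f|^2+\tau)^{p/2}$ to a single elementary inequality. Since $\tau>0$ keeps $u^2+\tau$ and $|f|^2+\tau$ bounded below by $\tau$, and $u=\mathrm{Re}\,f$, $v=\mathrm{Im}\,f$ are real-analytic, $\Phi_\tau$ is of class $C^\infty$; hence it is psh exactly when its complex Hessian is positive semidefinite, equivalently when its restriction to every complex line is subharmonic. On a complex line $\zeta\mapsto z_0+\zeta a$ the function $f$ restricts to a one-variable holomorphic function, so this reduces the whole statement to the classical one-dimensional picture in which $u$ and $v$ are conjugate harmonic functions.

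First I would record the relevant identities for the one-variable restriction. Writing $f'$ for the derivative along the line and using $\Delta u=\Delta v=0$, the chain rule gives $\Delta(u^2+\tau)^{p/2}=p\,(u^2+\tau)^{p/2-2}\bigl[(p-1)u^2+\tau\bigr]\,|f'|^2$, while the standard computation for $|f|^2$ with $f$ holomorphic gives $\Delta(|f|^2+\tau)^{p/2}=p\,(|f|^2+\tau)^{p/2-2}\bigl[p|f|^2+2\tau\bigr]\,|f'|^2$. Since $|f'|^2\ge0$, subharmonicity of $\Phi_\tau$ along the line is equivalent to the pointwise inequality
$$\tfrac{p}{p-1}(u^2+\tau)^{p/2-2}\bigl[(p-1)u^2+\tau\bigr]\ \ge\ (|f|^2+\tau)^{p/2-2}\bigl[p|f|^2+2\tau\bigr].$$
(The same reduced inequality also comes out of a direct computation of the complex Hessian of $\Phi_\tau$ in an arbitrary direction.)

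It remains to verify this as a pure inequality in the nonnegative reals $a:=u^2$ and $b:=v^2$ (so that $|f|^2=a+b$), for $1<p\le2$ and $\tau>0$; this is the real content. The key move is the substitution $c:=a+\tau>0$: using $(p-1)a+\tau=(p-1)c+(2-p)\tau$ and $p(a+b)+2\tau=p(c+b)+(2-p)\tau$, the inequality rearranges to
$$p\bigl[c^{p/2-1}-(c+b)^{p/2-1}\bigr]+(2-p)\,\tau\Bigl[\tfrac{p}{p-1}\,c^{p/2-2}-(c+b)^{p/2-2}\Bigr]\ \ge\ 0 .$$
Both bracketed terms are now separately nonnegative: $p/2-1\le0$ and $p/2-2<0$ make $t\mapsto t^{p/2-1}$ and $t\mapsto t^{p/2-2}$ non-increasing and $c\le c+b$, while $\tfrac{p}{p-1}\ge1$ handles the extra factor in the second bracket; and $p>0$, $2-p\ge0$, $\tau>0$. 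I expect the main (if modest) obstacle to be exactly the bookkeeping that produces this clean split: estimating $(|f|^2+\tau)^{p/2-2}$ crudely by $(u^2+\tau)^{p/2-2}$ alone is too wasteful and fails for intermediate values of $b$, so one must keep the term $c^{p/2-1}-(c+b)^{p/2-1}$ intact rather than discarding it. Feeding this back through the complex-line reduction then yields that $\Phi_\tau$ is psh on $\Omega$.
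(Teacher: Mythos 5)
Your proof is correct and follows essentially the same route as the paper: the paper computes $i\partial\bar\partial$ of both terms directly, regroups exactly as you do into a $(\,\cdot\,+\tau)^{p/2-1}$ part and a $\tau(\,\cdot\,+\tau)^{p/2-2}$ part, and compares termwise using $u^2+\tau\le |f|^2+\tau$ with negative exponents together with $\tfrac{p}{p-1}\ge 1$ and $\partial u\wedge\bar\partial u=\tfrac14\,\partial f\wedge\overline{\partial f}$. Your reduction to complex lines and one-variable Laplacians is just a rephrasing of the same computation.
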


\begin{proof}
A straightforward calculation shows 
\begin{eqnarray}\label{eq:RT_0}
\partial\bar{\partial} (|f|^2+\tau)^{p/2} & = & \frac{p}2 (|f|^2+\tau)^{\frac{p}2-1}\partial\bar{\partial} |f|^2 + \frac{p}2\left(\frac{p}2-1\right)(|f|^2+\tau)^{\frac{p}2-2}\partial |f|^2\wedge \bar{\partial} |f|^2\nonumber\\
& = & \frac{p^2}4 (|f|^2+\tau)^{\frac{p}2-1} \partial f\wedge \overline{\partial f} + \frac{p}2\left(1-\frac{p}2\right) \tau (|f|^2+\tau)^{\frac{p}2-2}\partial f\wedge \overline{\partial f}, 
\end{eqnarray}
and since $\partial\bar\partial u=0$,
\begin{eqnarray}\label{eq:RT_0'}
\partial\bar{\partial} (u^2+\tau)^{p/2} & = & \frac{p}2 (u^2+\tau)^{\frac{p}2-1}\partial\bar{\partial} u^2 + \frac{p}2\left(\frac{p}2-1\right)(u^2+\tau)^{\frac{p}2-2}\partial u^2\wedge \bar{\partial} u^2\nonumber\\
& = & p(p-1) (u^2+\tau)^{\frac{p}2-1} \partial u\wedge \bar \partial u + p(2-p) \tau (u^2+\tau)^{\frac{p}2-2} \partial u\wedge \bar{\partial} u.
\end{eqnarray}
Since $f$ is holomorphic,  we have $\partial u=\frac12 \partial f$,  so that
$$
\partial u\wedge \bar{\partial} u=\frac14 \partial f\wedge \overline{\partial f}.
$$
Thus \eqref{eq:RT_0} together with \eqref{eq:RT_0'} give
\begin{eqnarray*}
\frac{p}{p-1} i\partial \bar{\partial} (u^2+\tau)^{p/2} & \ge & p^2 (u^2+\tau)^{\frac{p}2-1}i\partial u\wedge \bar{\partial} u + p(2-p) \tau (u^2+\tau)^{\frac{p}2-2} i\partial u\wedge \bar{\partial} u\\
& \ge & i\partial\bar\partial (|f|^2+\tau)^{p/2}.
\end{eqnarray*}
\end{proof}

\begin{proof}[Proof of Theorem \ref{th:Main}]
Step 1.  Let $1<p\le 2$.  Let $G_{\Omega_t}(\cdot,z_0)$ be the (negative) Green function of $\Omega_t$ with pole at $z_0$,  so that $\Delta G_{\Omega_t}(\cdot,z_0)=\delta_{z_0}$,  the Dirac measure at $z_0$.  By the Poisson-Jensen formula (cf.  \cite{Hayman},  Theorem 5.27 and Theorem 5.23;  see also \cite{Ransford} for $n=1$),  we have
\begin{eqnarray*}
\int_{\partial \Omega_t} (|f|^2+\tau)^{p/2} d\omega_{z_0,t} 
& = & (|f(z_0)|^2+\tau)^{p/2} - \int_{\Omega_t}  G_{\Omega_t}(\cdot,z_0) \Delta  (|f|^2+\tau)^{p/2}\\
& \le &\frac{p}{p-1} \left[(u(z_0)^2+\tau)^{p/2} -  \int_{\Omega_t}  G_{\Omega_t}(\cdot,z_0) \Delta  (u^2+\tau)^{p/2}\right]\\
& = & \frac{p}{p-1}\int_{\partial \Omega_t} (u^2+\tau)^{p/2} d\omega_{z_0,t}
\end{eqnarray*}
where the inequality follows from Lemma \ref{lm:Riesz-type} and the condition $v(z_0)=0$ (so that $f(z_0)=u(z_0)$).  

Letting $\tau\rightarrow 0+$,  we  obtain
$$
\int_{\partial \Omega_t} |f|^p d\omega_{z_0,t}  \le \frac{p}{p-1} \int_{\partial \Omega_t} |u|^p d\omega_{z_0,t}.
$$
This combined with \eqref{eq:Lumer} gives \eqref{eq:Main} with $C_p=\left(\frac{p}{p-1}\right)^{1/p}$.   

Step 2.  Let $2<p\le 4$.  Set $\tilde{p}=p/2$ and $\tilde{f}=f^2$.  If one writes $\tilde{f}=\tilde{u}+i\tilde{v}$,  then
$$
\tilde{u}= u^2-v^2,\ \ \ \tilde{v}=2uv.
$$
Set $\tilde{g}=i(\tilde{f}-\tilde{u}(z_0))= i(\tilde{f}-{u}(z_0)^2)$.   Since $\mathrm{Im\,}\tilde{g}(z_0)=0$ and $1<\tilde{p}<2$,  it follows from Step 1 that 
$$
\int_{\partial \Omega_t} |\tilde{g}|^{\tilde{p}} d\omega_{z_0,t} \le \frac{\tilde{p}}{\tilde{p}-1}
\int_{\partial \Omega_t} |\tilde{v}|^{\tilde{p}} d\omega_{z_0,t} = \frac{\tilde{p}\, 2^{\tilde{p}}}{\tilde{p}-1}
\int_{\partial \Omega_t} |uv|^{\tilde{p}} d\omega_{z_0,t}. 
$$  
Thus 
\begin{eqnarray*}
\int_{\partial \Omega_t} |f|^{{p}} d\omega_{z_0,t} & = & \int_{\partial \Omega_t} |\tilde{f}|^{\tilde{p}} d\omega_{z_0,t} \le 2^{\tilde{p}-1} \left[ |{u}(z_0)|^{{p}} 
+ \int_{\partial \Omega_t} |\tilde{g}|^{\tilde{p}} d\omega_{z_0,t}\right] \\
& \le &  2^{\tilde{p}-1} \left[  \int_{\partial \Omega_t} |u|^{{p}} d\omega_{z_0,t}
+  \frac{\tilde{p}\, 2^{\tilde{p}}}{\tilde{p}-1}
\int_{\partial \Omega_t} |uv|^{\tilde{p}} d\omega_{z_0,t}
\right].
\end{eqnarray*}
Use the elementary inequality $2|uv|^{\tilde{p}}\le \varepsilon^{-1} |u|^p + \varepsilon |v|^p$ with $\varepsilon\ll1$,  we conclude that  
$$
\int_{\partial \Omega_t} |f|^{{p}} d\omega_{z_0,t} \le C_p  \int_{\partial \Omega_t} |u|^{{p}} d\omega_{z_0,t}
$$
for suitable $C_p>0$.  

Step 3.   Let $3<p<\infty$.  We define  $p'$ to be the largest odd number which is smaller than $p$. Set $\tilde{p}=p/p'$ and $\tilde{f}=f^{p'}$.  
Since $p'\ge p-2$ and $p'\ge 3$,  it follows that
$$
1< \tilde{p}\le 1+\frac2{p'} <2.
$$
  Write $\tilde{f}=\tilde{u}+i\tilde{v}\in \mathcal O(\Omega)$.  Then we have
$$
\tilde{u}=\sum_{k: 2k < p'} \tbinom{p'}{2k}  (iv)^{2k}u^{p'-2k}\ \ \ \text{and}\ \ \  i\tilde{v}=\sum_{k: 2k-1\le p'} \tbinom{p'}{2k-1}  (iv)^{2k-1}u^{p'-2k+1}.
$$
Since $v(z_0)=0$,   so $\tilde{v}(z_0)=0$.  Moreover,  we have
\begin{eqnarray}\label{eq:RT_1}
\int_{\partial \Omega_t} |f|^p d\omega_{z_0,t}  & = & \int_{\partial \Omega_t} |\tilde{f}|^{\tilde{p}} d\omega_{z_0,t} \le \frac{\tilde{p}}{\tilde{p}-1} \int_{\partial \Omega_t} |\tilde{u}|^{\tilde{p}} d\omega_{z_0,t}  \nonumber\\
& \lesssim &  \sum_{k: 2k < p'} \int_{\partial \Omega_t} |v|^{2k \tilde{p}}|u|^{p-2k\tilde{p}} d\omega_{z_0,t} 
\end{eqnarray}
where the implicit constant depends only on $p$.  

On the other hand,  H\"older's inequality yields
\begin{eqnarray}\label{eq:RT_2}
&& \int_{\partial \Omega_t} |v|^{2k \tilde{p}}|u|^{p-2k\tilde{p}} d\omega_{z_0,t} \nonumber\\
& \le & \left[ \int_{\partial \Omega_t} |v|^p d\omega_{z_0,t}   \right]^{2k/p'}
 \left[ \int_{\partial \Omega_t} |u|^p d\omega_{z_0,t} \right]^{1-2k/p'}\nonumber\\
 & \le & \left[ \int_{\partial \Omega_t} |f|^p d\omega_{z_0,t}  \right]^{2k/p'}
 \left[ \int_{\partial \Omega_t} |u|^p d\omega_{z_0,t}  \right]^{1-2k/p'}\nonumber\\
 & \le & \frac{\varepsilon^{p'/2k}}{p'/2k} \int_{\partial \Omega_t} |f|^p d\omega_{z_0,t}  + \frac{\varepsilon^{-p'/(p'-2k)}}{p'/(p'-2k)} \int_{\partial \Omega_t} |u|^p d\omega_{z_0,t}
\end{eqnarray}
for all $\varepsilon>0$ and $0<k < p'/2$,  in view of Young's inequality.   
\eqref{eq:RT_1} and \eqref{eq:RT_2} 
yield \eqref{eq:Main} provided  $\varepsilon\ll1$.  
\end{proof}

\begin{remark}
In Step 2,  the constant $C_p$ tends to infinity as $p\rightarrow 2+$.  It would be interesting to find certain $C_p$ which is uniformly bounded for $2<p\le 3$.  
\end{remark}

\section{Proof of Theorem \ref{th:Zygmund}}

We first show  the following 

\begin{proposition}\label{prop:Kolmogorov}
Let $\Omega$ be a  domain  in $\mathbb C^n$ and $z_0\in \Omega$.   If\/ $f=u+iv\in \mathcal O(\Omega)$ satisfies $u>0$,  then for any $\alpha>1$ and  any exhaustion $\{\Omega_t\}$ of\/ $\Omega$ with $\Omega_t\ni z_0$,  
$$
\int_{\partial \Omega_t} \frac{|f +e^{\alpha+1}|}{\log^\alpha |f+e^{\alpha+1}|^2 }\, d\omega_{z_0,t} < \frac{|f(z_0) +e^{\alpha+1}|}{\log^\alpha |f(z_0)
 + e^{\alpha+1}|^2} + 
 \frac{\alpha+1}{\alpha-1}\,  \frac{u(z_0) +e^{\alpha+1}}{\log^{\alpha-1} (u(z_0) +e^{\alpha+1})^2}.
$$
\end{proposition}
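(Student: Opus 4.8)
The plan is to imitate the Poisson--Jensen comparison used in the proof of Theorem \ref{th:Main}, now applied to a pair of auxiliary functions. First I would replace $f$ by $g:=f+e^{\alpha+1}$; since $u>0$ the real part $U:={\rm Re\,}g=u+e^{\alpha+1}$ exceeds $e^{\alpha+1}>1$, hence $|g|\ge U>1$ and $\log|g|^{2}>0$ throughout $\Omega$. Setting
$$
P(w):=\frac{|w|}{(\log|w|^{2})^{\alpha}},\qquad \psi(s):=\frac{s}{(\log s^{2})^{\alpha-1}},
$$
which are smooth and positive on $\{|w|>1\}$ and on $(1,\infty)$ respectively, one has $P(g(z_{0}))=\dfrac{|f(z_{0})+e^{\alpha+1}|}{\log^{\alpha}|f(z_{0})+e^{\alpha+1}|^{2}}$ and $\psi(U(z_{0}))=\dfrac{u(z_{0})+e^{\alpha+1}}{\log^{\alpha-1}(u(z_{0})+e^{\alpha+1})^{2}}$, so the assertion to be proved is exactly
$$
\int_{\partial\Omega_{t}}P(g)\,d\omega_{z_{0},t}<P(g(z_{0}))+\frac{\alpha+1}{\alpha-1}\,\psi(U(z_{0})).
$$

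The crux is the pointwise inequality
\begin{equation}
\Delta_{w}P(w)+\frac{\alpha+1}{\alpha-1}\,\psi''({\rm Re\,}w)\le 0\qquad\text{whenever }{\rm Re\,}w>e^{\alpha+1},
\tag{$\star$}
\end{equation}
$\Delta_{w}$ denoting the Laplacian in $w\in\mathbb C$ and $\psi''$ an ordinary one-variable derivative. To establish $(\star)$ I would first carry out the (radial) computations
$$
\Delta_{w}P=2^{-\alpha}\,\frac{(\log r-\alpha)^{2}+\alpha}{r(\log r)^{\alpha+2}}\quad(r=|w|),\qquad
\psi''(s)=2^{1-\alpha}\,\frac{(\alpha-1)(\alpha-\log s)}{s(\log s)^{\alpha+1}},
$$
from which $P$ is seen to be subharmonic and, since $\log({\rm Re\,}w)>\alpha+1>\alpha$ on the given half-space, $w\mapsto\psi({\rm Re\,}w)$ superharmonic there; only the combination $(\star)$ is actually needed. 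Clearing the common factor $2^{-\alpha}$, $(\star)$ becomes
$$
\frac{(\log r-\alpha)^{2}+\alpha}{r(\log r)^{\alpha+2}}\ \le\ \frac{2(\alpha+1)(\log U-\alpha)}{U(\log U)^{\alpha+1}},\qquad U={\rm Re\,}w.
$$
Because $r=|w|\ge U$, it suffices to discard the $1/r$ versus $1/U$ factor and to show that $\phi(\rho):=\dfrac{(\rho-\alpha)^{2}+\alpha}{\rho^{\alpha+2}}$ satisfies $\phi(\rho)\le\dfrac{2(\alpha+1)(\mu-\alpha)}{\mu^{\alpha+1}}$ for all $\rho\ge\mu:=\log U>\alpha+1$. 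One computes $\phi'(\rho)=-\alpha\rho^{-\alpha-3}\bigl[\rho^{2}-2(\alpha+1)\rho+(\alpha+1)(\alpha+2)\bigr]$, and the bracketed quadratic has discriminant $-4(\alpha+1)<0$, so $\phi$ is decreasing; hence $\phi(\rho)\le\phi(\mu)$, and $\phi(\mu)\le\dfrac{2(\alpha+1)(\mu-\alpha)}{\mu^{\alpha+1}}$ reduces, after multiplying through by $\mu^{\alpha+2}$ and writing $t:=\mu-\alpha>1$, to the evident inequality $t^{2}+\alpha\le 2(\alpha+1)(t^{2}+\alpha t)$. Thus $(\star)$ holds; note that the shift by $e^{\alpha+1}$ is used exactly to guarantee $\log({\rm Re\,}w)>\alpha+1$.

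The passage from $(\star)$ to the integral bound is then routine. Using the identity $\Delta(Q\circ g)=(\Delta_{w}Q)(g)\,|\partial g|^{2}$, with $|\partial g|^{2}:=\sum_{j=1}^{n}|\partial g/\partial z_{j}|^{2}\ge0$, valid for $g$ holomorphic and $Q\in C^{2}$, applied to $Q=P$ and to $Q(w)=\psi({\rm Re\,}w)$, inequality $(\star)$ gives
$$
\Delta\!\Bigl(P(g)+\tfrac{\alpha+1}{\alpha-1}\,\psi(U)\Bigr)=|\partial g|^{2}\Bigl[(\Delta_{w}P)(g)+\tfrac{\alpha+1}{\alpha-1}\,\psi''(U)\Bigr]\le 0
$$
on $\Omega$, so $\Psi:=P(g)+\tfrac{\alpha+1}{\alpha-1}\psi(U)$ is a smooth superharmonic function. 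Fixing $t$ and applying the Poisson--Jensen formula on $\Omega_{t}$ exactly as in the proof of Theorem \ref{th:Main} (with the negative Green function $G_{\Omega_{t}}(\cdot,z_{0})$; both summands of $\Psi$ are smooth near $\overline{\Omega_{t}}$), we get $\int_{\partial\Omega_{t}}\Psi\,d\omega_{z_{0},t}\le\Psi(z_{0})$, i.e.
$$
\int_{\partial\Omega_{t}}P(g)\,d\omega_{z_{0},t}+\frac{\alpha+1}{\alpha-1}\int_{\partial\Omega_{t}}\psi(U)\,d\omega_{z_{0},t}\le P(g(z_{0}))+\frac{\alpha+1}{\alpha-1}\psi(U(z_{0})).
$$
Since $\psi(U)>0$ on $\partial\Omega_{t}$ and $\omega_{z_{0},t}$ has positive total mass, the second term on the left is strictly positive, and discarding it yields the claimed strict inequality.

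The main obstacle is $(\star)$: arranging the constant $\tfrac{\alpha+1}{\alpha-1}$ together with the shift $e^{\alpha+1}$ so that the two Laplacians balance is delicate, and it is essential to use $|w|\ge{\rm Re\,}w$ in order to control the $1/|w|$ factor by $1/{\rm Re\,}w$, rather than attempting to compare $\log|w|$ with $\log({\rm Re\,}w)$ directly; once the monotonicity of $\phi$ and the elementary estimate at $\rho=\log({\rm Re\,}w)$ are in hand the rest is bookkeeping. The only other delicate point is the customary one about applying Poisson--Jensen on a general relatively compact $\Omega_{t}$, but this is already assumed in the proof of Theorem \ref{th:Main} and causes no trouble here, since the integrands are smooth on a neighborhood of $\overline{\Omega_{t}}$.
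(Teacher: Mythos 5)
Your proposal is correct and follows essentially the same route as the paper: both construct the superharmonic combination $\frac{|g|}{\log^{\alpha}|g|^{2}}+\frac{\alpha+1}{\alpha-1}\,\frac{{\rm Re}\,g}{\log^{\alpha-1}({\rm Re}\,g)^{2}}$ for $g=f+e^{\alpha+1}$, apply the Poisson--Jensen formula on $\Omega_{t}$, and discard the strictly positive $\psi$-term to obtain the strict inequality. The only difference is bookkeeping: you verify superharmonicity through the single one-variable Laplacian inequality $(\star)$ (using $|w|\ge{\rm Re}\,w$ and the monotonicity of $\phi$), whereas the paper estimates the complex Hessians of $\lambda_{\alpha}(\log|g|^{2})$ and $\lambda_{\alpha-1}(\log({\rm Re}\,g)^{2})$ separately against the common form $i\partial g\wedge\overline{\partial g}\,/\,(|g|\log^{\alpha}|g|^{2})$ --- the two computations are equivalent via $\Delta(Q\circ g)=(\Delta_{w}Q)(g)\,|\partial g|^{2}$.
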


\begin{proof}
Given $\beta>0$,  define 
$$
\lambda_\beta(t):=\exp\left(\frac{t}2-\beta \log t\right),\ \ \ t>0.
$$
Then we have
\begin{eqnarray*}
\lambda_\beta'(t) & = &\lambda_\beta(t)\left(\frac12-\frac{\beta}t \right)\\
\lambda_\beta''(t) & = & \lambda_\beta(t)\left(\frac12-\frac{\beta}t\right)^2 + \lambda_\beta(t)\,\frac{\beta}{t^2}.  
\end{eqnarray*}
For $t\ge 2(\alpha+1)$,  we have 
$$
\lambda_\alpha''(t)\le \lambda_\alpha(t)\left(\frac14 + \frac{\alpha(\alpha+1)}{t^2}\right)\le \frac{\lambda_\alpha(t)}2.
$$
Replace $f$ by $f+e^{\alpha+1}$,  we always assume $|f|\ge u\ge e^{\alpha+1}$.  Set 
$$
\varphi=\lambda_\alpha(\log |f|^2),\ \ \ \psi=\lambda_{\alpha-1}(\log u^2).
$$
Since $\log |f|^2$ is pluriharmonic on $\Omega$,  we have
\begin{eqnarray*}
i\partial\bar{\partial} \varphi & = & \lambda_\alpha ''(\log |f|^2) i\partial \log |f|^2\wedge \bar{\partial} \log |f|^2\\
& \le & \frac{\lambda_\alpha(\log |f|^2)}2  \frac{i\partial f\wedge \overline{\partial f} }{|f|^2 }\\
& = &
 \frac{i\partial f\wedge \overline{\partial f} }{2|f| \log^\alpha |f|^2}.
\end{eqnarray*}
On the other hand,  since $u$ is pluriharmonic,  we have $\partial \bar{\partial} \log u^2=-2u^{-2} \partial u\wedge \bar{\partial} u$,  so that
\begin{eqnarray*}
\partial \bar{\partial} \psi & = & \lambda_{\alpha-1}'(\log u^2) \partial \bar{\partial} \log u^2 +
\lambda_{\alpha-1}''(\log u^2) \partial \log u^2\wedge \bar{\partial} \log u^2\\
& = & \left(-\frac{\lambda_{\alpha-1}'(\log u^2)}2 +\lambda_{\alpha-1}''(\log u^2) \right) \frac{4\partial u\wedge \bar{\partial} u}{u^2}\\
& = & \left(-\frac{\alpha-1}2+\frac{\alpha(\alpha-1)}{\log u^2}\right) \frac{4\partial u\wedge \bar{\partial}u}{u\log^\alpha u^2}.
\end{eqnarray*}
Note that $\partial u=\frac12 \partial f$,  $|f|\ge u\ge e^{\alpha+1}$.  Thus
$$
-i\partial \bar{\partial} \psi \ge \frac{\alpha-1}{2(\alpha+1)} \frac{i \partial f\wedge \overline{\partial f} }{|f| \log^\alpha |f|^2}.
$$
It follows that 
$$
\phi:=-\frac{\alpha+1}{\alpha-1}\psi-\varphi
$$
is a smooth psh function on $\Omega$.  By the Poisson-Jensen formula,  
$$
\int_{\partial \Omega_t} \phi d\omega_{z_0,t}= \phi(z_0) - \int_{\Omega_t} G_{\Omega_t}(\cdot,z_0)\Delta \phi\ge \phi(z_0).
$$ 
Hence
$$
\int_{\partial \Omega_t} \varphi d\omega_{z_0,t}\le -\phi(z_0)-\frac{\alpha+1}{\alpha-1} \int_{\partial \Omega_t} \psi d\omega_{z_0,t}<  -\phi(z_0),
$$
i.e.,  
$$
\int_{\partial \Omega_t} \frac{|f|}{\log^\alpha |f|^2}\, d\omega_{z_0,t} <
\frac{|f(z_0)|}{\log^\alpha |f(z_0)|^2}+ 
\frac{\alpha+1}{\alpha-1}\,
 \frac{u(z_0)}{\log^{\alpha-1} u(z_0)^2}.
$$
\end{proof}

\begin{proof}[Proof of Theorem \ref{th:Zygmund}]
We first  assume  $|u|<1$.  Note that the conformal mappings $\exp\left( \pm \frac{\pi i}2 \zeta \right)$ map the strip $\{\zeta\in \mathbb C: |\mathrm{ Re}\,\zeta| < 1\}$ onto the right half plane.  Apply Proposition \ref{prop:Kolmogorov} to $F_{\pm}:=\exp\left( \pm \frac{\pi i}2 f \right)$,  we obtain  
$$
\int_{\partial \Omega_t} \frac{|F_{\pm} +e^{\alpha+1}|}{\log^\alpha |F_{\pm}+e^{\alpha+1}|^2 }\, d\omega_{z_0,t} \le C_\alpha\,  \frac{|F_{\pm}(z_0) +e^{\alpha+1}|}{\log^{\alpha-1} |F_{\pm}(z_0)
 + e^{\alpha+1}|^2}
$$
where $C_\alpha$ denotes a generic constant depending only on $\alpha$.  
Since $\mathrm{Re\,}F_{\pm}>0$,  we have 
$$
 e^{\mp \frac{\pi} 2 v} = |F_{\pm}|< |F_{\pm} +e^{\alpha+1}|\le |F_{\pm}| + e^{\alpha+1}
 = e^{\mp \frac{\pi} 2 v} + e^{\alpha+1},
 $$
 so that 
 $$
\int_{\partial \Omega_t} \frac{e^{\mp \frac{\pi} 2 v}  }{\log^\alpha | e^{\mp \frac{\pi} 2 v} +e^{\alpha+1}| }\, d\omega_{z_0,t} \le C_\alpha
$$
(recall that $v(z_0)=0$).  
Thus
\begin{eqnarray*}
 && \int_{\partial \Omega_t} \frac{e^{ \frac{\pi} 2 |v|}  }{\log^\alpha ( e^{ \frac{\pi} 2 |v|} +e^{\alpha+1} ) }\, d\omega_{z_0,t} \\
 & = & \int_{\partial \Omega_t \cap \{v\ge 0\}} \frac{e^{ \frac{\pi} 2 v }  }{\log^\alpha ( e^{ \frac{\pi} 2 v } +e^{\alpha+1} ) }\, d\omega_{z_0,t} 
 +   \int_{\partial \Omega_t \cap \{v < 0\}} \frac{e^{- \frac{\pi} 2 v }  }{\log^\alpha ( e^{ - \frac{\pi} 2 v } +e^{\alpha+1} ) }\, d\omega_{z_0,t}\\
 & \le & C_\alpha,
\end{eqnarray*}
from which \eqref{eq:Zygmund} immediately follows.  

If $|u|\le 1$,  then it suffices to apply the previous assertion to $r f$ with $r\rightarrow 1-$.
\end{proof}

\section{Proofs of Theorem \ref{th:Main_2} and Theorem \ref{th:Z_2}}

\begin{proof}[Proof of Theorem \ref{th:Main_2} ]
First consider the case $1<p\le 2$.  By \eqref{eq:Lelong-Jensen} and Lemma \ref{lm:Riesz-type},  we have
\begin{eqnarray*}
&& \int_{\partial \Omega_t} (|f|^2+\tau)^{p/2} d \mu_{z_0,t}
 - (2\pi)^n (|f(z_0)|^2+\tau)^{p/2}\\
 & = &  \int_{\Omega_t} (-t- g_{\Omega}(\cdot,z_0))   dd^c  (|f|^2+\tau)^{p/2}  \wedge (dd^c g_\Omega(\cdot,z_0))^{n-1}\\
& \le &\frac{p}{p-1} \int_{\Omega_t} (-t- g_{\Omega}(\cdot,z_0))   dd^c  (|u|^2+\tau)^{p/2}  \wedge (dd^c g_\Omega(\cdot,z_0))^{n-1}  \\
& = & \frac{p}{p-1} \left[ \int_{\partial \Omega_t} (|u|^2+\tau)^{p/2} d \mu_{z_0,t}
 - (2\pi)^n (|u(z_0)|^2+\tau)^{p/2} \right].  
\end{eqnarray*}
Letting $\tau\rightarrow 0+$,  we immediately obtain
$$
\int_{\partial \Omega_t} |f|^p d\mu_{z_0,t}  \le \frac{p}{p-1} \int_{\partial \Omega_t} |u|^p d\mu_{z_0,t},
$$
from which \eqref{eq:Main_2} immediately follows.

The argument of the case $2<p<\infty$ is parallel to Theorem \ref{th:Main}.
\end{proof}

\begin{proposition}\label{prop:K_2}
Let $\Omega$ be a hyperconvex domain  in $\mathbb C^n$.   Suppose that $f=u+iv\in \mathcal O(\Omega)$ satisfies $u>0$ and $v(z_0)=0$\/ for some $z_0\in \Omega$.   Then for any $\alpha>1$,
$$
\frac1{(2\pi)^n}\int_{\partial \Omega_t} \frac{|f +e^{\alpha+1}|}{\log^\alpha |f+e^{\alpha+1}|^2 }\, d\mu_{z_0,t} < \frac{|f(z_0) +e^{\alpha+1}|}{\log^\alpha |f(z_0)
 + e^{\alpha+1}|^2} + 
 \frac{\alpha+1}{\alpha-1}\,  \frac{u(z_0) +e^{\alpha+1}}{\log^{\alpha-1} (u(z_0) +e^{\alpha+1})^2}.
$$
\end{proposition}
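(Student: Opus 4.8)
The plan is to run the argument of Proposition~\ref{prop:Kolmogorov} almost verbatim, substituting the Lelong--Jensen formula \eqref{eq:Lelong-Jensen} for the Poisson--Jensen formula. Replacing $f$ by $f+e^{\alpha+1}$ we may assume $|f|\ge u\ge e^{\alpha+1}$; with $\lambda_\beta(t)=\exp(t/2-\beta\log t)$ set $\varphi=\lambda_\alpha(\log|f|^2)$ and $\psi=\lambda_{\alpha-1}(\log u^2)$. Since $u\ge e^{\alpha+1}>0$, the functions $\log|f|^2$ and $\log u^2$ — hence $\varphi$ and $\psi$ — are smooth on all of $\Omega$. The same pointwise computation as in the proof of Proposition~\ref{prop:Kolmogorov}, using the pluriharmonicity of $\log|f|^2$ and of $u$, the identity $\partial u=\tfrac12\partial f$, and the bound $\lambda_\alpha''(t)\le\tfrac12\lambda_\alpha(t)$ valid for $t\ge 2(\alpha+1)$, shows that
$$
\phi:=-\frac{\alpha+1}{\alpha-1}\,\psi-\varphi
$$
is a smooth psh function on $\Omega$.

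I would then apply \eqref{eq:Lelong-Jensen} with the test function taken to be $\phi$, which is legitimate since $\phi$ is continuous psh on $\Omega$ and, by Demailly's results \cite{Demailly}, the Monge--Amp\`ere currents there are well defined. On $\Omega_t$ one has $-t-g_\Omega(\cdot,z_0)\ge 0$ and $dd^c\phi\wedge(dd^c g_\Omega(\cdot,z_0))^{n-1}\ge 0$, so the right-hand side of \eqref{eq:Lelong-Jensen} is nonnegative, giving
$$
\frac1{(2\pi)^n}\int_{\partial\Omega_t}\phi\,d\mu_{z_0,t}\ \ge\ \phi(z_0).
$$
Because $\psi>0$ on $\Omega$, this yields
$$
\frac1{(2\pi)^n}\int_{\partial\Omega_t}\varphi\,d\mu_{z_0,t}\ \le\ -\phi(z_0)-\frac{\alpha+1}{\alpha-1}\,\frac1{(2\pi)^n}\int_{\partial\Omega_t}\psi\,d\mu_{z_0,t}\ <\ -\phi(z_0);
$$
unwinding the definitions of $\varphi,\psi,\phi$ and evaluating at $z_0$ reproduces exactly the asserted inequality (stated for the original $f$, so that $f+e^{\alpha+1}$ appears in it).

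I do not expect a genuine obstacle. The two points to check are that $\phi$ is globally continuous — indeed smooth — psh on $\Omega$, so that \eqref{eq:Lelong-Jensen} applies, and that the remainder term in \eqref{eq:Lelong-Jensen} is nonnegative; both are immediate here, exactly as the positivity of the Green-function term was in Proposition~\ref{prop:Kolmogorov}. The only structural difference from that proposition is that $\partial\Omega_t$ is now a level set of the pluricomplex Green function rather than a smooth hypersurface, but $\mu_{z_0,t}$ is a positive measure of total mass $(2\pi)^n$ supported there, so integrating the continuous functions $\varphi,\psi,\phi$ against it causes no trouble. (Incidentally, the hypothesis $v(z_0)=0$ is not actually used in this estimate; under it, $|f(z_0)+e^{\alpha+1}|$ equals $u(z_0)+e^{\alpha+1}$.)
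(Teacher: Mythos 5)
Your proposal is correct and follows essentially the same route as the paper: the author likewise takes $\phi=-\frac{\alpha+1}{\alpha-1}\psi-\varphi$ from the proof of Proposition \ref{prop:Kolmogorov}, applies the Lelong--Jensen formula \eqref{eq:Lelong-Jensen}, and uses the nonnegativity of the remainder term together with $\psi>0$ to conclude. Your side remark that $v(z_0)=0$ is not needed for this estimate is also consistent with the paper, since Proposition \ref{prop:Kolmogorov} is stated without that hypothesis.
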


\begin{proof}
Let $\phi$ be as  the proof of Proposition \ref{prop:Kolmogorov}.  By \eqref{eq:Lelong-Jensen},  
$$
\int_{\partial \Omega_t} \phi d\mu_{z_0,t}=(2\pi)^n  \phi(z_0) + 
\int_{\Omega_t} (-t- g_{\Omega}(\cdot,z_0))   dd^c  \phi \wedge (dd^c g_\Omega(\cdot,z_0))^{n-1}
\ge (2\pi)^n \phi(z_0).
$$ 
Hence
$$
\int_{\partial \Omega_t} \varphi d\mu_{z_0,t}\le - (2\pi)^n  \phi(z_0)-\frac{\alpha+1}{\alpha-1} \int_{\partial \Omega_t} \psi d\mu_{z_0,t}<  - (2\pi)^n \phi(z_0),
$$
i.e.,  
$$
 \frac1{(2\pi)^n}  \int_{\partial \Omega_t} \frac{|f|}{\log^\alpha |f|} d\mu_{z_0,t} < \frac{|f(z_0)|}{\log^\alpha |f(z_0)|^2}+ 
\frac{\alpha+1}{\alpha-1}\,
 \frac{u(z_0)}{\log^{\alpha-1} u(z_0)^2}.
$$
\end{proof}

\begin{proof}[Proof of Theorem \ref{th:Z_2}]
Granted Proposition \ref{prop:K_2},  the argument is parallel to Theorem \ref{th:Zygmund}.
\end{proof}

\end{document}